\theoremstyle{plain}
\newtheorem{thm}{Theorem}[section]
\newtheorem{prop}[thm]{Proposition}
\newtheorem{lemma}[thm]{Lemma}
\newtheorem{cor}[thm]{Corollary}
\theoremstyle{definition}
\newtheorem{defn}[thm]{Definition}
\newtheorem*{defn*}{Definition}
\newtheorem{example}[thm]{Example}
\newtheorem*{example*}{Example}
\theoremstyle{remark}
\newtheorem{rmk}[thm]{Remark}
\newtheorem*{rmk*}{Remark}
\newcommand{\field}[1]{\mathbb{#1}}
\newcommand{\N}{\field{N}}
\newcommand{\R}{\field{R}}
\newcommand{\Z}{\field{Z}}
\newcommand{\ideal}[1]{\mathfrak{#1}}
\newcommand{\m}{\ideal{m}}
\DeclareMathOperator{\Max}{Max}
\DeclareMathOperator{\Spec}{Spec}
\DeclareMathOperator{\orc}{\Omega}
\renewcommand{\phi}{\varphi}
\author{Neil Epstein}
\address{Department of Mathematical Sciences \\ George Mason University \\ Fairfax, VA  22030}
\email{nepstei2@gmu.edu}
\author{Jay Shapiro}
\address{Department of Mathematical Sciences \\ George Mason University \\ Fairfax, VA  22030}
\email{jshapiro@gmu.edu}
\title{The Ohm-Rush content function}
\subjclass[2010]{13B02, 13A15, 13F30}
\keywords{content algebra, semicontent algebra, power series extensions}
\date{December 27, 2014}
\begin{document}
\begin{abstract}
The content of a polynomial over a ring $R$ is a well understood notion.  Ohm and Rush generalized this concept of a content map to an arbitrary ring extension of $R$, although it can behave quite badly.
We examine five properties an algebra may have with respect to this function -- content algebra, weak content algebra, semicontent algebra (our own definition), Gaussian algebra, and Ohm-Rush algebra.  We show that the Gaussian, weak content, and semicontent algebra properties are all transitive.  However, transitivity is unknown for the content algebra property.  We then compare the Ohm-Rush notion with the more usual notion of content in the power series context.
We show that many of the given properties coincide for the power series extension map over a valuation ring of finite dimension, and that they are equivalent to the value group being order-isomorphic to the integers or the reals.  Along the way, we give a new characterization of Pr\"ufer domains.
\end{abstract}

\maketitle

\section{Introduction}
Let $R$ be a commutative ring.  The polynomial algebra extension $R \rightarrow R[X]$ enjoys many remarkable properties.  Beyond the fact that it is faithfully flat and that prime ideals extend to prime ideals, there is the \emph{content} function, which sends any element of $R[X]$ to a characteristic ideal of $R$ -- namely, the ideal generated by the coefficients of the polynomial in question.

Of course, the polynomial algebra extension is not the only ring extension that enjoys such properties (e.g. certain semigroup rings \cite{No-content} and power series extensions \cite{nmeSh-DMpower}).  Accordingly, Ohm and Rush \cite{OhmRu-content} (see also Eakin and Silver \cite{EakSi-almost}) defined the notion of a \emph{content algebra} $R \rightarrow S$ (see Definition~\ref{def:ca}) as a faithfully flat extension with a so-called \emph{content function} (see Definition~\ref{def:orc}) from $S$ to the finitely generated ideals of $R$ satisfying certain properties enjoyed by the content function of polynomial extensions.  This idea was used and expanded upon in Mott and Schexnayder \cite{MoSc-semivalue}, Rush \cite{Ru-content}, and more recently by Nasehpour \cite{Nas-zdcontent, Nas-ABconj}.

Later, Rush \cite{Ru-content} defined the notion of a \emph{weak content algebra} (see Definition~\ref{def:wca}), a condition that is easier to check than the defining condition of content algebras.  This condition was explored further, in a geometric context, by G. Picavet \cite{Pic-contenu}.  It is clear that any content algebra is a faithfully flat weak content algebra, which raises the question of whether the converse is true.  Rush claimed not \cite{Ru-content}; he showed that if $R$ is Noetherian, then the power series extension $R \rightarrow R[\![X]\!]$ is a weak content algebra, and then he claimed that there exist Noetherian $R$ for which this is not a content algebra.  However, we showed in \cite{nmeSh-DMpower} that indeed, whenever $R$ is Noetherian, $R \rightarrow R[\![X]\!]$ is a content algebra.

In this paper, we develop further properties of the Ohm-Rush content function and both of the above kinds of algebras, as well as a variant by Nasehpour (the \emph{Gaussian algebra}, which is stronger than either of Rush's) and our own variant (the \emph{semicontent algebra}, which sits between Rush's two).  When the base ring is Noetherian, we characterize semicontent algebras in terms of primary ideals (see Proposition~\ref{pr:primaries}).  Also, we show that the weak content, semicontent, and Gaussian algebra properties are \emph{transitive} (see Corollary~\ref{cor:weaktrans} and Theorems~\ref{thm:transitive} and \ref{thm:Gausstransitive}), which is unknown for the content algebra property.

Finally, we explore the situation of power series extensions.  In that case, there is already a `common sense' notion of the content of a power series (see Definition~\ref{def:pscontent}), which sometimes coincides with the Ohm-Rush content and sometimes does not (see \S\ref{sec:power} for precise statements).  We show among other things that when $V$ is a finite-dimensional valuation ring, the extension $V \rightarrow V[\![X]\!]$ is Gaussian iff it is weak content iff the value group is isomorphic to $\mathbb R$ or $\mathbb Z$ (see Corollary~\ref{cor:1dimV}).  We also (see Theorem~\ref{thm:Pr}) give a new characterization of Pr\"ufer domains.

\section{Basics}

We start with some background on general notions of ``content'', introduced by Ohm and Rush in 1972 \cite{OhmRu-content}:

\begin{defn}\label{def:orc}
Let $R$ be a ring, $M$ an $R$-module, and $f\in M$.  Then the \emph{(Ohm-Rush) content} of $f$ is given by  \[
\orc(f) := \bigcap\{I \subseteq R \text{ ideal } \mid f \in IM\}.
\]
If $f\in \orc(f)M$ for all $f\in M$, we say that $M$ is an \emph{Ohm-Rush module}; if $M$ is moreover an $R$-algebra, we say that it is an \emph{Ohm-Rush algebra} over $R$.\footnote{This is not the terminology of Ohm and Rush, but we feel that the new terminology will create less confusion.  Also, our reason for using a symbol other than the `$c$' that they use will become evident in \S\ref{sec:power}.  The symbol $\orc$ stands for `Ohm'.}
\end{defn}

The following is a useful alternate characterization:

\begin{lemma}\label{lem:fgdef}
Let $M$ be any $R$-module and $f \in M$.  Then \[
\orc(f) = \bigcap\{I \subseteq R \mid f \in IM \text{ and } I \text{ is finitely generated}\}.
\]
\end{lemma}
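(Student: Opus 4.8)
The plan is to prove the two inclusions separately, and the only real content lies in observing that membership in a submodule of the form $IM$ is a finitely generated phenomenon. Write $C := \orc(f) = \bigcap\{I \mid f \in IM\}$ for the intersection over all ideals, and $C' := \bigcap\{I \mid f \in IM,\ I \text{ finitely generated}\}$ for the intersection over finitely generated ideals; the goal is to show $C = C'$.

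First I would establish $C \subseteq C'$, which is immediate: the index family for $C'$ is a subfamily of the index family for $C$, since every finitely generated ideal is in particular an ideal. Intersecting over a smaller family can only enlarge the result, so $C \subseteq C'$ follows with no further work.

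The reverse inclusion $C' \subseteq C$ is where the argument actually happens, though it remains elementary. It suffices to show that $C' \subseteq I$ for each ideal $I$ with $f \in IM$. Given such an $I$, I would write $f = \sum_{i=1}^n a_i m_i$ with $a_i \in I$ and $m_i \in M$, which is possible precisely because $f \in IM$. Setting $J := (a_1, \ldots, a_n)$, one sees that $J$ is a finitely generated ideal contained in $I$ with $f \in JM$ by construction, so $J$ appears in the family defining $C'$. Hence $C' \subseteq J \subseteq I$, and since $I$ was an arbitrary ideal with $f \in IM$, intersecting over all such $I$ gives $C' \subseteq C$. The only step requiring any thought is this construction of $J$, and even there the obstacle is merely recognizing that an element of $IM$ is a finite $I$-linear combination of elements of $M$; notably, no finiteness or Noetherian hypothesis on $R$ or $M$ is needed. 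Combining the two inclusions yields the claimed equality.
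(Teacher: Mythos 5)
Your proof is correct and follows essentially the same argument as the paper: the easy inclusion comes from intersecting over a subfamily, and the substantive inclusion comes from extracting a finitely generated subideal $J = (a_1,\dotsc,a_n) \subseteq I$ from a representation $f = \sum_i a_i m_i$. The only difference is cosmetic -- you argue at the level of ideal containments $C' \subseteq J \subseteq I$ where the paper tracks a single element $a$ of the intersection.
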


\begin{proof}
Evidently the displayed intersection contains $\orc(f)$.  Conversely, let $a \in R$ be a member of the given intersection. Let $J$ be an ideal of $R$ such that $f\in JM$.  Then there exist $j_1, \dotsc, j_n \in J$ and $m_1, \dotsc, m_n \in M$ such that $f=\sum_{i=1}^n j_i m_i$.  But then setting $I := (j_1, \dotsc, j_n)$, $I$ is a finitely generated ideal and $f \in IM$, whence $a \in I \subseteq J$.  Thus, $a\in \orc(f)$.
\end{proof}

\begin{defn}\label{def:ca}
If $R \rightarrow S$ is a faithfully flat Ohm-Rush algebra and satisfies the additional ``Dedekind-Mertens''
condition that for all $f, g \in S$, there is some $n \in \N$ such that $\orc(f)^n \orc(g) = \orc(f)^{n-1} \orc(fg)$, then one says \cite{OhmRu-content} that $S$ is a \emph{content algebra} over $R$.
\end{defn}

Weakening this latter concept somewhat, we make the following definition:
\begin{defn}\label{def:semi}
A faithfully flat Ohm-Rush $R$-algebra $S$ is a \emph{semicontent algebra} if for any multiplicative set $W \subseteq R$, whenever $f, g \in S$ such that $\orc(f)_W = R_W$, we have $\orc(fg)_W = \orc(g)_W$.
\end{defn}

The final preliminary definition we encounter is that of a \emph{weak content algebra}, introduced by Rush \cite{Ru-content} in 1978:
\begin{defn}\label{def:wca}
An Ohm-Rush algebra $R \rightarrow S$ is called a \emph{weak content algebra} if for any $f, g\in S$, one has $\orc(f)\orc(g) \subseteq \sqrt{\orc(fg)}$.
\end{defn}
Note that it follows from being an Ohm-Rush algebra that $\orc(fg) \subseteq \orc(f)\orc(g)$ and $\orc(f+g) \subseteq \orc(f)+\orc(g)$.  Also, recall \cite[Theorem 1.2]{Ru-content} that if $R \rightarrow S$ is an Ohm-Rush algebra, then it is a weak content algebra if and only if for every $P \in \Spec R$, either $PS = S$ or $PS \in \Spec S$.

\begin{prop}
Any ring map $R \rightarrow S$ satisfies the following implications:
\begin{center}
content algebra $\implies$ semicontent algebra $\implies$ weak content algebra.
\end{center}
\end{prop}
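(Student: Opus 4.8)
The plan is to establish the two implications separately, in each case reducing everything to statements about the ideals $\orc(f)$, $\orc(g)$, $\orc(fg)$ of $R$ after localization. Both directions are short; the real observation is that the entire statement can be tested after localizing the content ideals of $R$, where products, powers, and radicals all behave predictably.

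For content algebra $\implies$ semicontent algebra, I would start from the Dedekind-Mertens condition. Fix a multiplicative set $W \subseteq R$ and $f, g \in S$ with $\orc(f)_W = R_W$, and choose $n \geq 1$ with $\orc(f)^n \orc(g) = \orc(f)^{n-1}\orc(fg)$ (the exponent $n-1$ forces $n \geq 1$). Since localization commutes with products and powers of ideals, localizing this identity at $W$ gives $(\orc(f)_W)^n\,\orc(g)_W = (\orc(f)_W)^{n-1}\,\orc(fg)_W$. Substituting $\orc(f)_W = R_W$, so that every positive power and the zeroth power collapse to $R_W$, yields $\orc(g)_W = \orc(fg)_W$, which is exactly the semicontent condition. (In fact the inclusion $\orc(fg)_W \subseteq \orc(g)_W$ already follows from the Ohm-Rush inequality $\orc(fg) \subseteq \orc(f)\orc(g) \subseteq \orc(g)$ alone.)

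For semicontent algebra $\implies$ weak content algebra, note first that a semicontent algebra is by definition an Ohm-Rush algebra, so it remains only to verify $\orc(f)\orc(g) \subseteq \sqrt{\orc(fg)}$. Since $\sqrt{\orc(fg)}$ is the intersection of all primes $P \supseteq \orc(fg)$, it suffices to show each such $P$ contains $\orc(f)\orc(g)$; because $P$ is prime, this is equivalent to showing $\orc(f) \subseteq P$ or $\orc(g) \subseteq P$. I would argue by contradiction: supposing $\orc(f) \not\subseteq P$ and $\orc(g) \not\subseteq P$, set $W = R \setminus P$. Then $\orc(f)_W = R_W$, since an ideal localizes to the unit ideal at $P$ precisely when it is not contained in $P$. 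The semicontent condition then forces $\orc(fg)_W = \orc(g)_W = R_W$; but $\orc(fg) \subseteq P$ gives $\orc(fg)_W \subseteq PR_P \neq R_W$, a contradiction.

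The step most likely to require care is this second translation between containment in a prime $P$ and behavior after localizing at $W = R \setminus P$ — in particular keeping straight that $\orc(f) \not\subseteq P$ is equivalent to $\orc(f)_W = R_W$, and invoking the prime property that $\orc(f)\orc(g) \subseteq P$ holds if and only if $\orc(f) \subseteq P$ or $\orc(g) \subseteq P$. One could instead deduce weak content from the cited characterization via $PS$ being prime or the unit ideal, but the direct radical argument above appears more transparent and uses only the localized form of the semicontent hypothesis.
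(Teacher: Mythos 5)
Your proof is correct, and both implications are argued along genuinely different lines than the paper's. For content $\implies$ semicontent, the paper first reduces to the case $W=\{1\}$ by citing the theorem of Ohm--Rush that the content algebra property localizes, and only then applies the Dedekind--Mertens identity; you instead localize the identity $\orc(f)^n\orc(g)=\orc(f)^{n-1}\orc(fg)$ directly, using only that ideal products and powers commute with localization. Your version is more self-contained (no appeal to \cite[Theorem 6.2]{OhmRu-content}), at the cost of nothing. For semicontent $\implies$ weak content, the paper verifies Rush's characterization of weak content algebras --- that $PS$ is prime or the unit ideal for every $P \in \Spec R$ --- by taking $fg \in PS$, $f \notin PS$, choosing $d \in \orc(f)\setminus P$, and localizing at the powers of $d$; you instead verify the defining containment $\orc(f)\orc(g) \subseteq \sqrt{\orc(fg)}$ directly, writing the radical as an intersection of primes and localizing at $R\setminus P$. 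The two arguments are close in spirit (both exploit that $\orc(f) \not\subseteq P$ forces $\orc(f)$ to become the unit ideal after localization), but yours avoids invoking \cite[Theorem 1.2]{Ru-content}, whereas the paper's route has the side benefit of exhibiting the geometric content of the statement, namely that primes of $R$ extend to primes of $S$ or blow up. One tiny point worth making explicit in your write-up: the semicontent and weak content definitions also require the Ohm-Rush (and, for semicontent, faithfully flat) structure, which transfers trivially since a content algebra is by definition a faithfully flat Ohm-Rush algebra; you note this only in passing for the second implication.
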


\begin{proof}
For the first implication, since the property of being a content algebra localizes \cite[Theorem 6.2]{OhmRu-content}, it is enough to show the condition of Definition~\ref{def:semi} when $W=\{1\}$.  Let $f,g\in R$ with $\orc(f)=R$.  By definition of content algebra, there is some $n$ such that $\orc(g) = \orc(f)^n \orc(g) = \orc(f)^{n-1} \orc(fg) = \orc(fg)$.

For the second implication, let $P \in \Spec R$ and $f, g \in S$ with $fg \in PS$ but $f \notin PS$.  Then $\orc(f) \nsubseteq P$, so there is some $d\in \orc(f) \setminus P$.  Let $W := \{d^n \mid n \in \N\}.$  Then $\orc(f)_W = R_W$, whence $\orc(g)_W = \orc(fg)_W \subseteq PR_W$.  Then for any $x \in \orc(g)$, there is an $n \in \N$ such that $d^n x  \in P$.  But $d \notin P$, whence $x\in P$.  Thus, $\orc(g) \subseteq P$ so that $g \in PS$, as was to be shown.
\end{proof}

In Noetherian contexts, the following proposition is helpful:
\begin{prop}\label{pr:primaries}
Let $R$ be a ring, and let $S$ be a faithfully flat Ohm-Rush algebra.  If $R \rightarrow S$ is a semicontent algebra, then for any primary ideal $Q$ of $R$, $QS$ is a primary ideal of $S$.  The converse holds whenever $R$ is Noetherian.
\end{prop}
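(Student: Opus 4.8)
The plan is to prove the two directions separately; the forward direction is elementary, while the converse is where the real work — and the main obstacle — lies.

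For the forward direction, suppose $S$ is a semicontent algebra and let $Q$ be a primary ideal of $R$ with $P := \sqrt{Q}$. Since $Q$ is proper and $S$ is faithfully flat, $QS$ is proper, so I only need the primary implication. First note that $PS \subseteq \sqrt{QS}$: every $p \in P$ satisfies $p^n \in Q \subseteq QS$ for some $n$, so $p \in \sqrt{QS}$, and $\sqrt{QS}$ is an ideal. Now take $f, g \in S$ with $fg \in QS$ and $f \notin \sqrt{QS}$; I must show $g \in QS$. Since $f \in \orc(f)S$, the containment $\orc(f) \subseteq P$ would force $f \in PS \subseteq \sqrt{QS}$, a contradiction; hence $\orc(f) \not\subseteq P$, so I may pick $d \in \orc(f) \setminus P$ and set $W := \{d^n \mid n \in \N\}$, giving $\orc(f)_W = R_W$. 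The semicontent hypothesis then yields $\orc(g)_W = \orc(fg)_W$. As $fg \in QS$ we have $\orc(fg) \subseteq Q$, whence $\orc(g)_W \subseteq Q_W$; thus for each $x \in \orc(g)$ there is $N$ with $d^N x \in Q$, and since $d^N \notin \sqrt{Q}$ and $Q$ is primary, $x \in Q$. Therefore $\orc(g) \subseteq Q$ and $g \in \orc(g)S \subseteq QS$, as required.

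For the converse, assume $R$ is Noetherian and that $QS$ is primary for every primary $Q$. Fix a multiplicative set $W$ and $f, g \in S$ with $\orc(f)_W = R_W$; since $\orc(fg) \subseteq \orc(f)\orc(g) \subseteq \orc(g)$ always holds, it remains to prove $\orc(g)_W \subseteq \orc(fg)_W$. Here I would use Noetherianity to fix a primary decomposition $\orc(fg) = \bigcap_{i=1}^n Q_i$ with $Q_i$ being $P_i$-primary. Localizing at $W$ makes exactly the components with $P_i \cap W \neq \emptyset$ trivial, so $\orc(fg)_W = \bigcap_{P_i \cap W = \emptyset}(Q_i)_W$, and (using $(Q_i)_W \cap R = Q_i$ when $P_i \cap W = \emptyset$) it suffices to prove $\orc(g) \subseteq Q_i$ for each $i$ with $P_i \cap W = \emptyset$. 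Fix such an $i$. Since $P_i \cap W = \emptyset$ and $\orc(f)_W = R_W$, necessarily $\orc(f) \not\subseteq P_i$, so $f \notin Q_i S$; meanwhile $fg \in \orc(fg)S \subseteq Q_i S$, which is primary by hypothesis. From $fg \in Q_i S$ and $f \notin Q_i S$ the primary condition delivers $g \in \sqrt{Q_i S}$.

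The main obstacle is precisely the final upgrade from $g \in \sqrt{Q_i S}$ to $g \in Q_i S$ (equivalently $\orc(g) \subseteq Q_i$): bare primariness of $Q_i S$ only ever yields membership in a \emph{radical}, whereas the semicontent conclusion is an \emph{exact} equality of content ideals. The cleanest route I see is to strengthen $f \notin Q_i S$ to $f \notin \sqrt{Q_i S}$, for then the colon identity $(Q_i S : f) = Q_i S$ for the primary ideal $Q_i S$ converts $fg \in Q_i S$ into $g \in Q_i S$ at once. Since $\sqrt{Q_i S} = \sqrt{P_i S}$, this reduces to showing $\orc(f^m) \not\subseteq P_i$ for all $m$ given $\orc(f) \not\subseteq P_i$ — that is, to knowing $P_i S$ is prime, which by the characterization recalled above is the weak content condition that each prime of $R$ extends to a prime (or the unit ideal) of $S$. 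Passing from ``primes extend to primaries'' to ``primes extend to primes'' is the delicate step, and I expect it to be exactly where the Noetherian hypothesis and the full strength of the assumption (that \emph{all} primary ideals, not merely $P_i$, extend to primary ideals) must be brought to bear; it is plausible that additional input beyond primariness of a single $P_i S$ is required to close it.
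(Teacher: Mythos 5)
Your forward direction is correct and is essentially the paper's own argument: choose $d \in \orc(f) \setminus \sqrt{Q}$, localize at $W = \{d^n \mid n \in \N\}$, apply the semicontent condition, and use primariness of $Q$ inside $R$ to conclude $\orc(g) \subseteq Q$. (Your route to $\orc(f) \not\subseteq \sqrt{Q}$, via the elementary containment $PS \subseteq \sqrt{QS}$, is even slightly more economical than the paper's, which invokes the weak content property to write $\sqrt{QS} = \sqrt{Q}S$.)

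The converse in your write-up is incomplete: you arrive at $g \in \sqrt{Q_i S}$ and explicitly decline to upgrade this to $g \in Q_i S$, so as it stands you have not proved the statement. But your diagnosis of the obstruction is exactly right, and you should know that the paper does not overcome it either. After the same reduction and the same primary decomposition, the paper simply asserts that $f \notin \sqrt{Q_i S}$ ``since $\orc(f) = R$,'' with no justification; as you observe, justifying this amounts to knowing $\sqrt{Q_i S} = P_i S$ with $P_i S$ prime, i.e.\ the weak content property, which is not among the hypotheses. In fact no argument can fill this hole, because the converse as stated is false. Take $R = k$ a field and $S = k[x]/(x^2)$. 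Then $S$ is a faithfully flat Ohm-Rush $k$-algebra (Proposition~\ref{pr:field}), $k$ is Noetherian, and the unique primary ideal $(0)$ of $k$ extends to the ideal $(0)$ of $S$, which is primary because every zerodivisor of $S$ is nilpotent. Yet $S$ is not a semicontent $k$-algebra: taking $W = \{1\}$ and $f = g = x$, one has $\orc(f) = k$ while $\orc(fg) = \orc(0) = (0) \neq k = \orc(g)$. (Equivalently, $S$ is not even weak content, being a non-domain over a field; see Proposition~\ref{pr:field}.)

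So the ``additional input'' you suspected is needed really is needed: the converse becomes true, and your outline closes, exactly when one adds the hypothesis that $S$ is a weak content $R$-algebra, i.e.\ that primes of $R$ extend to primes (or the unit ideal) of $S$. Under that extra hypothesis, Noetherianness gives $P_i^n \subseteq Q_i$ for some $n$, whence $P_i S \subseteq \sqrt{Q_i S} \subseteq \sqrt{P_i S} = P_i S$, so $\sqrt{Q_i S} = P_i S$; since $\orc(f) \not\subseteq P_i$, it follows that $f \notin \sqrt{Q_i S}$, and primariness of $Q_i S$ then yields $g \in Q_i S$, i.e.\ $\orc(g) \subseteq Q_i$ for every component with $P_i \cap W = \emptyset$, which finishes the proof along the lines you laid out.
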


\begin{proof}
First, suppose that $R \rightarrow S$ is a semicontent algebra.  Let $Q$ be a primary ideal of $R$, and let $fg \in QS$ with $f \notin \sqrt{QS} = \sqrt{Q}S$ (where the latter equality follows from the fact that $R \rightarrow S$ is a weak content algebra).  Then $\orc(f) \nsubseteq \sqrt{Q}$.  Let $d\in \orc(f) \setminus \sqrt{Q}$, and set $W := \{d^n \mid n \in \N\}$.  Then $\orc(f)_W = R_W$, so by the semicontent condition, $\orc(g)_W = \orc(fg)_W \subseteq QR_W$, whence (since $\orc(g)$ is a finitely generated ideal of $R$) there is some $n$ with $d^n \orc(g) \subseteq Q$.  But $d \notin \sqrt{Q}$.  As $Q$ is a primary ideal, it then follows that $\orc(g) \subseteq Q$.  Thus, $g \in QS$, as was to be shown.

Conversely, suppose that $R$ is Noetherian and that primary ideals extend to primary ideals.  As this is a property that survives localization at any multiplicative subset of $R$, we may take $f, g\in S$ and assume that $\orc(f) = R$.  It will be enough to show that $\orc(fg) = \orc(g)$.  Since $R$ is Noetherian, we may take a primary decomposition of $\orc(fg)$, namely: \[
\orc(fg) = Q_1 \cap \cdots \cap Q_k,
\]
where each $Q_i$ is  primary.  For each $i$, we have $fg \in Q_i S$, but $f \notin \sqrt{Q_i S}$ (since $\orc(f) = R$).  Therefore, since $Q_i S$ is primary, it follows that $g \in Q_i S$, whence $\orc(g) \subseteq Q_i$.  Since this holds for all $1\leq i \leq k$, it follows that $\orc(g) \subseteq \orc(fg)$, as was to be shown.
\end{proof}

\section{Transitivity}\label{sec:trans}
Given any property a ring extension may have, it is interesting to know whether and when the property  is transitive.  In this section, we recall that the Ohm-Rush property is transitive, we show that the Ohm-Rush content function itself is transitive in this context, and we show that the weak content, semicontent, and Gaussian algebra properties are all transitive.

\begin{defn}
As we deal with multiple ring extensions at once, it will be essential, given a ring homomorphism $\phi: S \rightarrow T$, to describe the (Ohm-Rush) content of $f\in T$ \emph{relative} to $S$, in which case we will write  $\orc_{TS}(f)$.  If there is no chance for confusion we may suppress the subscript.
\end{defn}

If $W$ is a multiplicatively closed subset of the ring $R$, and $a\in R$, then we use $a/1$ to denote the image of $a$ in $R_W$.

\begin{lemma}\label{lem:formula} Let $S$ be a faithfully flat Ohm-Rush $R$-algebra. Let $W$ be a multiplicatively closed subset of $R$.  Then
for all $f\in S$ and $w\in W$ we have
  $$\orc_{S_WR_W}(f/w) = \orc_{SR}(f)R_W.$$
Moreover $S_W$ is  faithfully flat  Ohm-Rush $R_W$-algebra.
\end{lemma}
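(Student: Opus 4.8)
The plan is to prove the displayed content formula first and then deduce the ``moreover'' clause from it. Throughout, I would identify $S_W$ with the localization of $S$ at the image of $W$ (so that $R_W \otimes_R S \cong W^{-1}S$), whence every element of $S_W$ has the shape $f/w$ with $f \in S$ and $w \in W$. Since $1/w$ is a unit of $R_W$, membership $f/w \in K S_W$ is equivalent to $f/1 \in K S_W$ for every ideal $K$ of $R_W$; hence $\orc_{S_WR_W}(f/w) = \orc_{S_WR_W}(f/1)$, and it suffices to treat $f/1$. I would then establish the two inclusions of $\orc_{S_WR_W}(f/1) = \orc_{SR}(f) R_W$ separately.

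For ``$\subseteq$'', note that $S$ is Ohm-Rush, so $f \in \orc_{SR}(f)\,S$; localizing gives $f/1 \in \bigl(\orc_{SR}(f) R_W\bigr) S_W$. Thus $\orc_{SR}(f) R_W$ is one of the ideals whose intersection defines $\orc_{S_WR_W}(f/1)$, yielding the inclusion. For ``$\supseteq$'', I would use Lemma~\ref{lem:fgdef} to reduce to showing $\orc_{SR}(f) R_W \subseteq I R_W$ for each \emph{finitely generated} ideal $I$ of $R$ with $f/1 \in I S_W$. Writing $f/1 = \sum_i c_i t_i$ with $c_i \in I$ and $t_i \in S_W$ and clearing denominators produces some $u \in W$ with $uf \in IS$.

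The crux is the scalar formula $\orc_{SR}(uf) = u\,\orc_{SR}(f)$ for $u \in R$, which is where flatness enters and which I expect to be the main obstacle. The inclusion $\orc_{SR}(uf) \subseteq u\,\orc_{SR}(f)$ is immediate from $uf \in u\,\orc_{SR}(f)\,S$. For the reverse, I would fix any ideal $I$ with $uf \in IS$; the map $R/(I :_R u) \xrightarrow{\,u\,} R/I$ is injective, and tensoring with the flat module $S$ keeps it injective, so $uf \in IS$ forces $f \in (I :_R u)\,S$. By definition of content this gives $\orc_{SR}(f) \subseteq (I :_R u)$, hence $u\,\orc_{SR}(f) \subseteq I$; intersecting over all such $I$ yields $u\,\orc_{SR}(f) \subseteq \orc_{SR}(uf)$. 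Applying this to our finitely generated $I$ with $uf \in IS$ gives $u\,\orc_{SR}(f) \subseteq I$, and since $u$ is a unit in $R_W$ I conclude $\orc_{SR}(f) R_W = u\,\orc_{SR}(f) R_W \subseteq I R_W$, completing ``$\supseteq$''.

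Finally, for the ``moreover'' clause: $S_W = S \otimes_R R_W$ is $R_W$-flat by base change of the flat algebra $S$. It is faithfully flat because, for every prime $P$ of $R$ disjoint from $W$, one has $P S_W \neq S_W$: otherwise clearing denominators would place some element of $W$ inside $PS \cap R = P$ (the last equality being the standard contraction property of the faithfully flat extension $R \to S$), contradicting $P \cap W = \emptyset$. Since every prime of $R_W$ has this form, flatness together with $\m S_W \neq S_W$ at all maximal $\m$ gives faithful flatness. The Ohm-Rush property is then immediate from the content formula just proved, since $f/w = (1/w)(f/1) \in \bigl(\orc_{SR}(f) R_W\bigr) S_W = \orc_{S_WR_W}(f/w)\,S_W$.
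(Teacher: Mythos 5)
Your proof is correct, but it takes a genuinely different route from the paper's. The paper's proof is essentially a citation: the content formula and the Ohm-Rush property of $S_W$ are quoted directly from \cite{OhmRu-content} (their Theorem 3.1), and only the faithful flatness of $S_W$ over $R_W$ is then deduced, via the scalar-multiplication property of the content and the fact that $\orc_{S_WR_W}(S_W)=R_W$. You instead reconstruct everything from scratch: the easy inclusion $\orc_{S_WR_W}(f/1)\subseteq\orc_{SR}(f)R_W$ from the Ohm-Rush property of $S$, and the hard inclusion by reducing via Lemma~\ref{lem:fgdef} to finitely generated ideals, clearing denominators, and invoking the scalar formula $\orc_{SR}(uf)=u\,\orc_{SR}(f)$, which you prove with the colon-ideal flatness argument (injectivity of $R/(I:_Ru)\xrightarrow{u}R/I$ preserved under tensoring with flat $S$). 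That scalar formula is precisely \cite[Corollary 1.6]{OhmRu-content}, which the paper itself invokes later (in the proof of Proposition~\ref{pr:Pruf}), so in effect you have re-proved the relevant pieces of Ohm--Rush's machinery rather than citing them. Your faithful flatness argument also differs slightly in mechanism (the contraction property $PS\cap R=P$ plus the maximal-ideal criterion, rather than the content-theoretic deduction in the paper), but both are standard and valid. What your approach buys is a self-contained exposition that makes explicit exactly where flatness enters; what the paper's buys is brevity by leaning on the literature.
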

\begin{proof} The formula and the fact that $S_W$ is an Ohm-Rush $R_W$-algebra is \cite[Theorem 3.1]{OhmRu-content}.   From the formula and the fact that $S$ is faithfully flat over $R$ we deduce that for all $x \in R_W$ and $y \in S_W$, $\orc_{S_WR_W}(xy) = x\orc_{S_WR_W}(y)$ and $\orc_{S_WR_W}(S_W)= R_W$.  Hence $S_W$ is faithfully flat over $R_W$.
\end{proof}

\begin{prop} \label{pr:equiv} Let $S$ be a faithfully flat Ohm-Rush $R$-algebra.  Then $S$ is a semicontent $R$-algebra if and only if for each each $g\in S$, $P\in \Spec(R)$ and  $f\in S\setminus PS$, there exists $t\in R\setminus P$ such that $t\orc(g)\subseteq \orc(fg)$.
\end{prop}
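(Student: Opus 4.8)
The plan is to prove both implications directly. Two standing facts about an Ohm-Rush algebra will be used throughout. First, for any $h \in S$ and any prime $P$ one has $h \in PS$ if and only if $\orc(h) \subseteq P$: the containment $h \in \orc(h)S$ shows $\orc(h) \subseteq P$ implies $h \in PS$, while the definition of content gives the reverse. Second, every content ideal is finitely generated, since writing $h = \sum_{i=1}^n a_i m_i$ with $a_i \in \orc(h)$ and $m_i \in S$ forces $\orc(h) = (a_1,\dots,a_n)$. I will also use the inclusion $\orc(fg) \subseteq \orc(f)\orc(g) \subseteq \orc(g)$ noted after Definition~\ref{def:wca}.

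For the forward implication, suppose $S$ is a semicontent algebra and fix $g$, $P$, and $f \in S \setminus PS$. Since $f \notin PS$ we have $\orc(f) \nsubseteq P$, so I pick $d \in \orc(f) \setminus P$ and set $W := \{d^n \mid n \in \N\}$. As $d \in \orc(f)$ becomes a unit in $R_W$, we get $\orc(f)_W = R_W$, and the semicontent condition yields $\orc(fg)_W = \orc(g)_W$. Now finite generation enters: writing $\orc(g) = (x_1,\dots,x_m)$, each $x_j/1$ lies in $\orc(fg)_W$, so some $d^{N_j} x_j \in \orc(fg)$; taking $N := \max_j N_j$ and $t := d^N$ gives $t\,\orc(g) \subseteq \orc(fg)$ with $t \notin P$ (as $P$ is prime and $d \notin P$), which is exactly the required element.

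For the converse, suppose the pointwise condition holds and fix a multiplicative set $W$ and $f,g \in S$ with $\orc(f)_W = R_W$; I must show $\orc(fg)_W = \orc(g)_W$. Since $\orc(fg) \subseteq \orc(g)$, the inclusion $\orc(fg)_W \subseteq \orc(g)_W$ is automatic, so the task is $\orc(g)_W \subseteq \orc(fg)_W$. The key device is the colon ideal $J := (\orc(fg) :_R \orc(g))$: if I can produce any $t \in J \cap W$, then $t$ is a unit of $R_W$ and $t\,\orc(g) \subseteq \orc(fg)$ gives $\orc(g)_W \subseteq \orc(fg)_W$ at once. So it suffices to show $J \cap W \neq \emptyset$. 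Suppose not; then $J$ is disjoint from $W$, so $J$ is contained in some prime $P$ with $P \cap W = \emptyset$. Since $\orc(f)_W = R_W$ forces $\orc(f) \nsubseteq P$, the first standing fact gives $f \notin PS$, and the hypothesis applied to $g$, $P$, $f$ yields $t \in R \setminus P$ with $t\,\orc(g) \subseteq \orc(fg)$, that is, $t \in J \subseteq P$ --- a contradiction.

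The main obstacle is arranging the converse so that a hypothesis stated one prime at a time can be brought to bear on an arbitrary localization. The colon-ideal reformulation is what bridges the two: it converts the desired inclusion $\orc(g)_W \subseteq \orc(fg)_W$ into the single question of whether $J$ meets $W$, which a routine prime-avoidance argument then settles by contradiction. The other point requiring care is the finite generation of $\orc(g)$, which is precisely what lets the forward direction compress finitely many powers of $d$ into one witness $t$.
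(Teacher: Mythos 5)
Your proof is correct, and while the forward direction matches the paper, your converse takes a genuinely different route. In the forward direction both arguments localize so that $\orc(f)$ becomes the unit ideal, apply the semicontent condition, and use finite generation of $\orc(g)$ to compress everything into a single multiplier $t$; the only difference is that you invert the powers of one element $d \in \orc(f)\setminus P$, whereas the paper takes $W = R\setminus P$ and passes through Lemma~\ref{lem:formula} to identify $\orc_{S_W R_W}$ with the localization of $\orc_{SR}$. For the converse, the paper fixes $W$ and proves $\orc(g)_W \subseteq \orc(fg)_W$ by a local-global argument: for each prime $Q$ of $R_W$ it lifts $Q$ to $P \in \Spec R$, applies the pointwise hypothesis to get $t \in R\setminus P$ with $t\orc(g) \subseteq \orc(fg)$, concludes the inclusion locally at $Q$, and then globalizes from equality at all primes. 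You instead encode the desired inclusion in the colon ideal $J = (\orc(fg) :_R \orc(g))$ and reduce everything to showing $J \cap W \neq \emptyset$, which you get by contradiction from a single prime $P \supseteq J$ disjoint from $W$. Both converses hinge on invoking the hypothesis at one well-chosen prime, but your colon-ideal device is more economical: it needs only one prime rather than all of them, it produces the required unit of $R_W$ explicitly, and it lets the whole proof run with contents over $R$ alone, so Lemma~\ref{lem:formula} and the auxiliary algebra $S_W$ over $R_W$ are never needed. What the paper's version buys in exchange is that it verifies the condition for arbitrary elements of $S_W$ (with contents computed over $R_W$), not just for images of elements of $S$; by Lemma~\ref{lem:formula} the two formulations are equivalent, but that form is the one echoed later in Corollary~\ref{cor:local}.
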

\begin{proof}
First assume that $S$ is a semicontent $R$-algebra and let $P\in \Spec(R)$, $g\in S$, and $f\in S\setminus PS$. Let $W=R\setminus P$.  Then $S_W$ is an Ohm-Rush $R_W$-algebra and  $\orc_{S_WR_W}((f/1)(g/1)) = \orc_{SR}(fg)R_W$ by Lemma~\ref{lem:formula}.  Moreover, as $f \in S\setminus PS$, $f/1$ is not in the extension of the unique maximal ideal of $R_W$.  Hence it follows that  $\orc_{S_WR_W}(f/1)= R_W$.  Thus  $\orc_{SR}(fg)R_W = \orc_{S_WR_W}((f/1)(g/1)) = \orc_{S_WR_W}(g/1) = \orc_{SR}(g)R_W$.  Since the latter ideal is finitely generated, there exists $x\in W$ such that $x\orc_{SR}(g)\subseteq \orc_{SR}(fg)$.

For the converse, let $W$ be a multiplicatively closed subset of $R$.  We must show that if  $f\in S_W$ and $g\in S_W$ with $\orc_{S_WR_W}(f) = R_W$, then $\orc_{S_WR_W}(fg) = \orc_{S_WR_W}(g)$.  We already know that $\orc_{S_WR_W}(fg) \subseteq \orc_{S_WR_W}(g)$.  It suffices to show that for any $Q\in \Spec(R_W)$, $\orc_{S_WR_W}(g)_Q \subseteq \orc_{S_WR_W}(fg)_Q $. Write $f=f'/w_1$ and $g=g'/w_2$ with $f',g' \in R$ and $w_1,w_2 \in W$.  Also let $P\in \Spec(R)$ such that $PR_W = Q$.  Clearly $f'\not\in PS$, thus by hypothesis there exists $t\in R\setminus P$ such that $t\orc_{SR}(g') \subseteq \orc_{SR}(f'g')$.   It follows that $t/1 \in R_W\setminus Q$ and $(t/1)\orc_{S_WR_W}(g) \subseteq \orc_{S_WR_W}(fg)$, and so locally the two ideals are equal.  Thus they are equal ideals of $R_W$.
\end{proof}

\begin{cor} \label{cor:local}
Let $S$ be a semicontent $R$-algebra and let $W\subset R$ be a multiplicatively closed set.   Then $ S_W$ is a semicontent $R_W$-algebra.
\end{cor}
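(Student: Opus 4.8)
The plan is to reduce everything to the criterion of Proposition~\ref{pr:equiv}, applied to the extension $R_W \rightarrow S_W$. First I would note that this extension is a faithfully flat Ohm-Rush $R_W$-algebra by Lemma~\ref{lem:formula}, so Proposition~\ref{pr:equiv} is available: it will suffice to show that for each $\gamma \in S_W$, each $Q \in \Spec(R_W)$, and each $\phi \in S_W \setminus QS_W$, there is some $\tau \in R_W \setminus Q$ with $\tau\,\orc_{S_WR_W}(\gamma) \subseteq \orc_{S_WR_W}(\phi\gamma)$. The idea is to produce $\tau$ by descending to $R$, invoking the hypothesis that $S$ is a semicontent $R$-algebra (again via Proposition~\ref{pr:equiv}), and then pushing the resulting inclusion back up to $R_W$.

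To set up the descent, I would use the standard correspondence of primes to write $Q = PR_W$ for a prime $P$ of $R$ disjoint from $W$, and write $\phi = f/w_1$, $\gamma = g/w_2$ with $f, g \in S$ and $w_1, w_2 \in W$. Since $QS_W = PS_W = (PS)_W$, the membership $f \in PS$ would force $f/1 \in QS_W$; taking the contrapositive and using that $\phi = f/w_1$ lies in $QS_W$ exactly when $f/1$ does, the assumption $\phi \notin QS_W$ yields $f \notin PS$. Now the forward direction of Proposition~\ref{pr:equiv}, applied to $g \in S$, $P \in \Spec(R)$, and $f \in S \setminus PS$, furnishes an element $t \in R \setminus P$ with $t\,\orc_{SR}(g) \subseteq \orc_{SR}(fg)$.

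Finally I would transport this inclusion to $R_W$. Setting $\tau := t/1$, the fact that $P$ is prime and disjoint from $W$ gives that $t \notin P$ implies $\tau \notin PR_W = Q$. Because Lemma~\ref{lem:formula} computes the content of an arbitrary fraction $s/w$ directly as $\orc_{SR}(s)R_W$, we have $\orc_{S_WR_W}(\gamma) = \orc_{S_WR_W}(g/w_2) = \orc_{SR}(g)R_W$ and $\orc_{S_WR_W}(\phi\gamma) = \orc_{S_WR_W}\bigl(fg/(w_1w_2)\bigr) = \orc_{SR}(fg)R_W$. Extending the inclusion $t\,\orc_{SR}(g) \subseteq \orc_{SR}(fg)$ along $R \rightarrow R_W$ then gives precisely $\tau\,\orc_{S_WR_W}(\gamma) \subseteq \orc_{S_WR_W}(\phi\gamma)$, which is what Proposition~\ref{pr:equiv} requires.

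The main obstacle I anticipate is the bookkeeping with the denominators $w_1, w_2$: one must check that passing from $\phi, \gamma$ to $f, g$ affects neither the membership condition $\phi \notin QS_W$ nor the relevant contents, and that the single multiplier $t$ produced over $R$ descends to an element $\tau$ genuinely outside $Q$. These points are routine once the prime correspondence $P \leftrightarrow PR_W$ and the content-localization formula of Lemma~\ref{lem:formula} are in hand, so I do not expect any genuinely hard step to remain.
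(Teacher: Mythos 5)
Your proof is correct and follows essentially the same route as the paper's: both establish that $S_W$ is a faithfully flat Ohm-Rush $R_W$-algebra via Lemma~\ref{lem:formula}, then verify the criterion of Proposition~\ref{pr:equiv} using the fact that every prime of $R_W$ is extended from a prime of $R$. The only difference is that you spell out the denominator bookkeeping that the paper leaves implicit (it parallels the fraction-chasing already done in the converse half of the proof of Proposition~\ref{pr:equiv}).
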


\begin{proof}
By Lemma~\ref{lem:formula}, $S_W$ is a faithfully flat Ohm-Rush $R_W$-algebra.  Since every prime ideal of $R_W$ is the extension of a prime ideal of $R$, it follows from Proposition~\ref{pr:equiv} that $S_W$ is a semicontent $R_W$-algebra.
\end{proof}

\begin{rmk} \rm It is now clear that if $S$ is a faithfully flat Ohm-Rush $R$-algebra, then $S$ is a semicontent $R$-algebra if and only if for all $P\in \Spec(R)$, and for all $g\in S_W$ ($W=R\setminus P$) and $f\in S_W$ where $\orc_{S_WR_P}(f) =R_P$, we have $\orc_{S_WR_P}(fg) = \orc_{S_WR_P}(g) $.
\end{rmk}

\begin{prop} \label{pr:morelocal} Let $S$ be  a semicontent $R$-algebra.  Let $V$ be a multiplicatively closed subset of $S$ such that $\orc(a) =R$ for all $a\in V$. Then
 $S_V$ is a semicontent $R$-algebra.
\end{prop}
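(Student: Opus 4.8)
The plan is to reduce the entire statement to a single content formula: for $s \in S$ and $v \in V$, I claim
\[
\orc_{S_V R}(s/v) = \orc_{SR}(s).
\]
Once this is in hand, all three requirements on $S_V$—the Ohm-Rush property, faithful flatness, and the semicontent condition over $R$—fall out quickly. As an immediate byproduct of the easy inclusion, $S_V$ is Ohm-Rush over $R$: since $s \in \orc_{SR}(s)\,S$, dividing by the unit $v$ gives $s/v \in \orc_{SR}(s)\,S_V = \orc_{S_V R}(s/v)\,S_V$.

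The formula is where the hypothesis $\orc(a)=R$ for $a \in V$ and the semicontent property of $S$ are used, and I expect this to be the \emph{main obstacle}. The inclusion $\orc_{S_V R}(s/v) \subseteq \orc_{SR}(s)$ is immediate from the observation above. For the reverse inclusion, suppose $s/v \in I S_V$ for some ideal $I \subseteq R$. Multiplying by $v$ shows $s/1 \in I S_V$, and clearing denominators in the localization produces some $a \in V$ with $as \in IS$, whence $\orc_{SR}(as) \subseteq I$. Now, because $\orc(a) = R$, I apply the semicontent condition of $S$ with $W = \{1\}$ to the pair $a, s$ to get $\orc_{SR}(as) = \orc_{SR}(s)$. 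Hence $\orc_{SR}(s) \subseteq I$, and intersecting over all such $I$ yields $\orc_{SR}(s) \subseteq \orc_{S_V R}(s/v)$.

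Faithful flatness is then handled as follows. The map $R \to S_V$ is flat as a composite of flat maps ($R \to S$ is faithfully flat, $S \to S_V$ is a localization), so it suffices to rule out $\m S_V = S_V$ for each maximal ideal $\m \subset R$. If $1/1 \in \m S_V$, the same denominator-clearing argument applied with $s=1$ and $I=\m$ gives some $a \in V$ with $a \in \m S$, forcing $\orc(a) \subseteq \m$ and contradicting $\orc(a) = R$.

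Finally, for the semicontent condition over $R$, take $f = s/v$ and $g = s'/v'$ in $S_V$ and a multiplicative set $W \subseteq R$ with $\orc_{S_V R}(f)_W = R_W$. The formula rewrites this hypothesis as $\orc_{SR}(s)_W = R_W$; the semicontent property of $S$ then gives $\orc_{SR}(ss')_W = \orc_{SR}(s')_W$, which, applying the formula to $fg = ss'/(vv')$ and to $g$, translates back into $\orc_{S_V R}(fg)_W = \orc_{S_V R}(g)_W$, as required. Thus the only genuinely delicate step is the denominator-clearing combined with the passage from $\orc_{SR}(as)$ to $\orc_{SR}(s)$ via the semicontent property of $S$; everything else is formal manipulation of the content formula.
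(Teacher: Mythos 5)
Your proposal is correct and follows essentially the same path as the paper: the same key formula $\orc_{S_VR}(s/v) = \orc_{SR}(s)$, proved by the same denominator-clearing argument combined with the semicontent property of $S$ applied to the pair $a, s$ (with trivial multiplicative set), the same derivation of the Ohm-Rush property from the easy inclusion, and the same contradiction argument for faithful flatness. The only divergence is the final step: the paper verifies the semicontent condition for $S_V$ via the characterization in Proposition~\ref{pr:equiv} (producing an element $t \in R\setminus P$), whereas you verify Definition~\ref{def:semi} directly, which works equally well -- and is arguably slightly cleaner -- since your formula makes the semicontent condition for $S_V$ over $R$ translate verbatim into the already-known condition for $S$ over $R$.
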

\begin{proof} We first claim that for $f\in S$ and $t\in V$, $\orc_{S_VR}(f/t) =\orc_{SR}(f)$.  As $S$ is an Ohm-Rush $R$-algebra, we have $f \in \orc_{SR}(f)S$, so that $f/1 \in \orc_{SR}(f)S_V$.  Thus $f/t \in \orc_{SR}(f)S_V$, and so $\orc_{S_VR}(f/t)\subseteq \orc_{SR}(f)$.

  Conversely let $I$ be an ideal of $R$ such that $f/t\in IS_V = (IS)S_V$.  Then $t'f \in IS$ for some $t'\in V$. Therefore $\orc_{SR}(t'f) \subseteq I$.  But $\orc_{SR}(t') =R$ and since $S$ is a semicontent $R$-algebra, we have $\orc_{SR}(t'f) = \orc_{SR}(f)$.  Thus the latter ideal is contained in $I$, and so $\orc_{SR}(f) \subseteq \orc_{S_VR}(f/t)$, which proves the claim.

  This claim then proves that $S_V$ is an Ohm-Rush $R$-algebra as it is now clear that $f\in \orc_{S_VR}(f/t)$ for all $f\in S$ and $t\in V$, whence $f/t \in \orc_{S_VR}(f/t)$.

  We next claim that $S_V$ is faithfully flat over $R$.   The flatness is clear as $S$ is flat over $R$.   Hence it suffices to show that if $P$ is a prime ideal of $R$, then $PS_V\neq S_V$.   However,   $PS_V = S_V$ would imply that there exists some $t\in PS\cap V$.  This in turn would imply that $\orc_{SR}(t)\subseteq P$, a contradiction to the assumption that $\orc(a) =R$ for all $a\in V$.

  To finish the proof, we apply Proposition~\ref{pr:equiv}.  To that end, let $P$ be a prime ideal of $R$, $g\in S_V$ and $f\in S_V\setminus PS_V$.  Write $f=f'/u$ and $g = g'/v$,  where $f', g'\in S$ and $u,v \in V$.   Note that $f'\in S\setminus PS$.  Thus there exists $t\in R\setminus P$ such that $t\orc_{SR}(g') \subseteq \orc_{SR}(f'g')$.  Combining this with the above  claim we have \[
   t\orc_{S_VR}(g) = t\orc_{SR}(g') \subseteq \orc_{SR}(f'g') =\orc_{S_VR}(fg)
 \]
 which is exactly what we need to finish the proof.
\end{proof}

The following result is analogous to \cite[Theorem 6.2]{OhmRu-content}.

\begin{cor} Let $S$ be a semicontent $R$-algebra, let $V$ be a multiplicatively closed subset of $S$ and let $W =V\cap R$.  Suppose that for every $v\in V$, $\orc(v)\cap W \neq \emptyset$.  Then $S_V$ is a semicontent $R_W$ algebra and for $f\in S$ and $v\in V$ we have
 $$\orc_{S_VR_W}(f/v) = \orc_{SR}(f)R_W.$$
\end{cor}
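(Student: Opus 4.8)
The plan is to realize the localization $S \to S_V$ as a two-stage process and then invoke the two localization results already established. First I note that $W = V \cap R$ is a multiplicatively closed subset of $R$ with $W \subseteq V$, so that inverting $V$ factors as inverting $W \subseteq V$ first and then inverting the image $\overline{V}$ of $V$; that is, $S_V$ is canonically identified with $(S_W)_{\overline{V}}$. By Corollary~\ref{cor:local}, $S_W$ is a semicontent $R_W$-algebra, which provides the base ring for the second stage.

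For the second stage I would verify the hypothesis of Proposition~\ref{pr:morelocal}, applied to $S_W$ over $R_W$ with the multiplicative set $\overline{V} \subseteq S_W$: namely that $\orc_{S_WR_W}(a) = R_W$ for every $a \in \overline{V}$. Indeed, for $v \in V$ the standing assumption furnishes some $w \in \orc_{SR}(v) \cap W$, and Lemma~\ref{lem:formula} gives $\orc_{S_WR_W}(v/1) = \orc_{SR}(v)R_W$, an ideal containing the image of $w$, which is a unit in $R_W$ because $w \in W$. Hence $\orc_{S_WR_W}(v/1) = R_W$. Proposition~\ref{pr:morelocal} then shows that $(S_W)_{\overline{V}} = S_V$ is a semicontent $R_W$-algebra, which is the first assertion.

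For the content formula I would chase $f/v$ through the two stages. Under the identification $S_V = (S_W)_{\overline{V}}$ the element $f/v$ equals $(f/1)/(v/1)$ with $f/1 \in S_W$ and $v/1 \in \overline{V}$. The content formula established inside the proof of Proposition~\ref{pr:morelocal} (the claim $\orc_{S_VR}(f/t) = \orc_{SR}(f)$, applied now to the algebra $S_W$ over the base $R_W$) gives $\orc_{S_VR_W}(f/v) = \orc_{S_WR_W}(f/1)$, while Lemma~\ref{lem:formula} with $w=1$ gives $\orc_{S_WR_W}(f/1) = \orc_{SR}(f)R_W$. Composing these two equalities yields $\orc_{S_VR_W}(f/v) = \orc_{SR}(f)R_W$, as desired.

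The main obstacle is bookkeeping rather than a new idea. I must make sure the content formula needed for the second stage is genuinely available, since it appears inside the proof of Proposition~\ref{pr:morelocal} rather than in its statement; so I would either cite that internal claim directly or re-run its short inclusion argument over the new base $R_W$. I also want to confirm that the canonical identification $S_V = (S_W)_{\overline{V}}$ is legitimate, which rests precisely on the fact that $W = V \cap R \subseteq V$ together with the injectivity $R \hookrightarrow S$ coming from faithful flatness; once this is in place, both the algebra structure and the relative content transform compatibly through the composite localization.
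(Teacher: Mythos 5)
Your proposal is correct and follows exactly the paper's own route: factor the localization as $S_V=(S_W)_{V'}$, apply Corollary~\ref{cor:local} to get $S_W$ semicontent over $R_W$, verify via Lemma~\ref{lem:formula} that images of elements of $V$ have unit content so Proposition~\ref{pr:morelocal} applies, and obtain the content formula by composing Lemma~\ref{lem:formula} with the internal claim in the proof of Proposition~\ref{pr:morelocal}. Your write-up in fact makes explicit the verification steps that the paper compresses into ``It follows that'' and ``successive applications of the formulas.''
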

\begin{proof} By Corollary~\ref{cor:local}, $S_W$ is a semicontent $R_W$-algebra.  Let $V'$ denote the image of $V$ in $S_W$.  It follows that each $v'\in V'$ satisfies $\orc_{S_WR_W}(v') = R_W$.  Hence by Proposition~\ref{pr:morelocal}, $S_V = (S_W)_{V'}$ is a semicontent $R_W$-algebra.  Finally the formula holds by successive applications of the formulas in Corollary~\ref{cor:local} and in (the proof of) Proposition~\ref{pr:morelocal}.
\end{proof}

In the sequel, we will need to know what the content of an \emph{ideal} is.  Accordingly, if $R \rightarrow S$ is a ring map and $J \subseteq S$ is an ideal, we set \[
\orc(J) := \bigcap \{\text{ideals } I \subseteq R \mid J \subseteq IS\}
\]
When $R$ is an \emph{Ohm-Rush} $R$-algebra, we also have $\orc(J) = \sum_{h \in J} \orc(h)$, and hence $J \subseteq \orc(J)S$ for all ideals $J$ of $R$.

\begin{lemma}
\label{lem:moreformula}
Let $R \subseteq S \subseteq T$ be  rings and homomorphisms such that $S$ is Ohm-Rush over $R$ and $T$ is Ohm-Rush over $S$.   Then $T$ is Ohm-Rush over $R$ and  $\orc_{SR}(\orc_{TS}(f)) =\orc_{TR}(f)$ for all $f\in T$.
\end{lemma}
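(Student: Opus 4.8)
The plan is to prove the displayed formula $\orc_{SR}(\orc_{TS}(f)) = \orc_{TR}(f)$ by two inclusions, after which the Ohm-Rush property of $T$ over $R$ falls out immediately. Throughout I would freely use the two facts recorded just before the statement: since $S$ is Ohm-Rush over $R$, every ideal $J$ of $S$ satisfies $J \subseteq \orc_{SR}(J)S$; and the assignment $J \mapsto \orc_{SR}(J)$ is monotone, since a larger ideal $J$ admits fewer ideals $I$ with $J \subseteq IS$, so its content, being an intersection over a smaller family, is larger.

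For the inclusion $\orc_{SR}(\orc_{TS}(f)) \subseteq \orc_{TR}(f)$, which is essentially formal, I would fix an arbitrary ideal $I$ of $R$ with $f \in IT$ and show $\orc_{SR}(\orc_{TS}(f)) \subseteq I$; intersecting over all such $I$ then yields the claim. Indeed, writing $IT = (IS)T$ exhibits $f$ as an element of $(IS)T$, so by the definition of content relative to $S$ we get $\orc_{TS}(f) \subseteq IS$. Applying the monotone operator $\orc_{SR}(-)$ and using that $\orc_{SR}(IS) \subseteq I$ (because $I$ itself is one of the ideals appearing in the intersection defining $\orc_{SR}(IS)$), I conclude $\orc_{SR}(\orc_{TS}(f)) \subseteq \orc_{SR}(IS) \subseteq I$.

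The reverse inclusion $\orc_{TR}(f) \subseteq \orc_{SR}(\orc_{TS}(f))$ is where the two Ohm-Rush hypotheses actually do the work. Since $T$ is Ohm-Rush over $S$, we have $f \in \orc_{TS}(f)\,T$; and since $S$ is Ohm-Rush over $R$, we have $\orc_{TS}(f) \subseteq \orc_{SR}(\orc_{TS}(f))\,S$. Chaining these gives $f \in \orc_{TS}(f)\,T \subseteq \orc_{SR}(\orc_{TS}(f))\,S\cdot T = \orc_{SR}(\orc_{TS}(f))\,T$. Thus $f$ lies in $IT$ for the ideal $I := \orc_{SR}(\orc_{TS}(f))$ of $R$, and so by definition of $\orc_{TR}$ we obtain $\orc_{TR}(f) \subseteq \orc_{SR}(\orc_{TS}(f))$.

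Finally, the Ohm-Rush property of $T$ over $R$ is immediate: the computation in the previous paragraph shows $f \in \orc_{SR}(\orc_{TS}(f))\,T$, and once the two inclusions establish $\orc_{SR}(\orc_{TS}(f)) = \orc_{TR}(f)$, this reads $f \in \orc_{TR}(f)\,T$, which is exactly the defining condition. I do not anticipate a serious obstacle here; the only points requiring care are keeping the three content operators $\orc_{TS}$, $\orc_{SR}$, $\orc_{TR}$ straight, correctly invoking the content-of-an-ideal conventions, and noting the (elementary but used) monotonicity of $\orc_{SR}(-)$ and the identity $\orc_{SR}(IS) \subseteq I$.
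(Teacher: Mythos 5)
Your proof is correct, and it differs from the paper's in a way worth noting: it is fully self-contained where the paper relies on an external citation. Both arguments split the identity into the same two inclusions, and your proof of $\orc_{TR}(f) \subseteq \orc_{SR}(\orc_{TS}(f))$ is identical to the paper's: $f \in \orc_{TS}(f)T \subseteq (\orc_{SR}(\orc_{TS}(f))S)T = \orc_{SR}(\orc_{TS}(f))T$, using the content-of-an-ideal fact for $S$ over $R$. The difference is in the other inclusion and in the first assertion of the lemma. The paper opens by citing \cite[1.2(ii)]{OhmRu-content} for the fact that $T$ is Ohm-Rush over $R$, and its proof of $\orc_{SR}(\orc_{TS}(f)) \subseteq \orc_{TR}(f)$ then uses that fact: it runs essentially your argument, but only for the single ideal $I = \orc_{TR}(f)$, which is legitimate precisely because $f \in \orc_{TR}(f)T$. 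You instead quantify over an arbitrary ideal $I$ of $R$ with $f \in IT$ (getting $\orc_{TS}(f) \subseteq IS$, then using monotonicity of $\orc_{SR}(-)$ on ideals and $\orc_{SR}(IS) \subseteq I$) and intersect at the end; this requires no Ohm-Rush hypothesis on the composite $R \to T$, so you can then deduce the transitivity of the Ohm-Rush property as a corollary of your computation $f \in \orc_{SR}(\orc_{TS}(f))T$ combined with the proven equality, rather than importing it. The trade-off is clear: the paper's proof is shorter modulo the reference, while yours reproves the cited transitivity result along the way, at the modest cost of making explicit the (easy) monotonicity of the ideal-content operator.
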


\begin{proof} By \cite[1.2(ii)]{OhmRu-content}, $T$ is an Ohm-Rush $R$-algebra, so all we need to do is show the equality. We first show that for $f\in T$, $\orc_{SR}(\orc_{TS}(f))\subseteq \orc_{TR}(f)$.  For any $f\in T$, we have $f \in \orc_{TR}(f)T = [\orc_{TR}(f)S]T$, with the inclusion following since $T$ is Ohm-Rush over $R$.  But then by definition of $\orc_{TS}$, we then have that $\orc_{TS}(f) \subseteq \orc_{TR}(f)S$.  Finally, by the definition of $\orc_{SR}(-)$ applied to ideals, we have $\orc_{SR}(\orc_{TS}(f)) \subseteq \orc_{TR}(f)$.

To show that $\orc_{SR}(\orc_{TS}(f)) \supseteq \orc_{TR}(f)$, it suffices to show that $f \in \orc_{SR}(\orc_{TS}(f))T$.  By applying the comment preceding the lemma to the ideal $\orc_{TS}(f)$ of $S$, we have $\orc_{TS}(f)\subseteq \orc_{SR}(\orc_{TS}(f))S$ since $S$ is an Ohm-Rush $R$-algebra.  Hence, $f \in \orc_{TS}(f)T \subseteq (\orc_{SR}(\orc_{TS}(f))S)T = \orc_{SR}(\orc_{TS}(f))T$.
\end{proof}

\begin{cor}\label{cor:weaktrans}
Let $R \rightarrow S \rightarrow T$ be rings and homomorphisms such that $S$ is a weak content $R$-algebra and $T$ is a weak content $S$-algebra.  Then $T$ is a weak content $R$-algebra.
\end{cor}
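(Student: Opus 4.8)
The plan is to bypass the radical inclusions of Definition~\ref{def:wca} entirely and argue instead via the prime-ideal characterization of weak content algebras recalled just after that definition: an Ohm-Rush algebra $R \rightarrow S$ is a weak content algebra precisely when, for every $P \in \Spec R$, either $PS = S$ or $PS \in \Spec S$. This trades the somewhat awkward condition $\orc(f)\orc(g) \subseteq \sqrt{\orc(fg)}$ for a clean statement about how primes extend, which composes transparently along a tower.

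First I would record that $T$ is an Ohm-Rush $R$-algebra. A weak content algebra is by definition an Ohm-Rush algebra, so $S$ is Ohm-Rush over $R$ and $T$ is Ohm-Rush over $S$; Lemma~\ref{lem:moreformula} then delivers that $T$ is Ohm-Rush over $R$ (and even supplies the formula $\orc_{TR}(f) = \orc_{SR}(\orc_{TS}(f))$, though this stronger output is not needed here). Having the Ohm-Rush property in place is exactly what licenses me to invoke the prime characterization for the extension $R \rightarrow T$, reducing the whole problem to checking that every $P \in \Spec R$ satisfies $PT = T$ or $PT \in \Spec T$.

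So I would fix $P \in \Spec R$ and split into two cases using that $S$ is weak content over $R$. If $PS = S$, then extending to $T$ gives $PT = (PS)T = ST = T$, and there is nothing more to do. Otherwise $PS \in \Spec S$, and here I would feed this prime of $S$ into the characterization applied to the weak content algebra $S \rightarrow T$: this forces $(PS)T = T$ or $(PS)T \in \Spec T$. Since extension of ideals is transitive along $R \rightarrow S \rightarrow T$, one has $(PS)T = PT$, so in this case as well $PT = T$ or $PT \in \Spec T$. Combining the two cases shows every prime of $R$ either blows up in $T$ or extends to a prime of $T$, and the characterization then yields that $T$ is a weak content $R$-algebra.

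I do not expect a genuine obstacle: the argument is essentially a one-line diagram chase once the right tool is chosen. The two points requiring a moment's care are confirming that the weak content hypotheses supply the Ohm-Rush property that the characterization presupposes (handled via Lemma~\ref{lem:moreformula}), and verifying that the iterated extension $(PS)T$ really does coincide with the direct extension $PT$, which is the only place the tower structure is actually used.
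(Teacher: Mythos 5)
Your proposal is correct and is essentially identical to the paper's own proof: both reduce to Rush's prime-ideal characterization of weak content algebras, invoke Lemma~\ref{lem:moreformula} (via \cite[1.2(ii)]{OhmRu-content}) for the Ohm-Rush property of $R \rightarrow T$, and run the same two-case argument on $PS$ using $(PS)T = PT$. No gaps.
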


\begin{proof}
Let $P \in \Spec R$.  Then either $PS = S$ or $PS \in \Spec S$, since $S$ is weak content over $R$.  In the former case, we have $PT = (PS)T =ST= T$, while in the latter we again have either $(PS)T = T$ or $(PS)T \in \Spec T$ since $T$ is weak content over $S$.  As $T$ is also an Ohm-Rush $R$-algebra, we are done.
\end{proof}

We next show the transitivity of the semicontent property.

\begin{thm} \label{thm:transitive} Let $T$ be a semicontent $S$-algebra and $S$ a semicontent $R$-algebra.  Then $T$ is a semicontent $R$-algebra.
\end{thm}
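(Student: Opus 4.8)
The plan is to verify the characterization of Proposition~\ref{pr:equiv} for the composite extension $R \ra T$ directly, using the transitivity formula for contents from Lemma~\ref{lem:moreformula}. That is, I would fix $P \in \Spec R$, an element $g \in T$, and an element $f \in T \setminus PT$, and attempt to produce some $t \in R \setminus P$ with $t\,\orc_{TR}(g) \subseteq \orc_{TR}(fg)$. By Lemma~\ref{lem:moreformula} we have $\orc_{TR}(h) = \orc_{SR}(\orc_{TS}(h))$ for every $h \in T$, so everything can be pushed through the intermediate ring $S$, applying the semicontent hypothesis first to $T$ over $S$ and then to $S$ over $R$.

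First I would set up the prime of $S$ lying over $P$: since $S$ is a semicontent (hence weak content) $R$-algebra, either $PS = S$ or $PS \in \Spec S$. The case $PS = S$ forces $PT = T$, contradicting $f \notin PT$, so I may assume $Q := PS$ is prime. The key observation is that $f \notin PT = QT$ means $\orc_{TS}(f) \nsubseteq Q$; concretely $\orc_{SR}(\orc_{TS}(f)) = \orc_{TR}(f) \nsubseteq P$, so $\orc_{TS}(f)$ is not contained in $Q = PS$. Thus $f \in T \setminus QT$, and I can apply Proposition~\ref{pr:equiv} to the extension $S \ra T$: there exists $s \in S \setminus Q$ with
\[
s\,\orc_{TS}(g) \subseteq \orc_{TS}(fg).
\]

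Next I would apply $\orc_{SR}$ to both sides. Using the additivity/monotonicity of the ideal-content function and the scalar property $\orc_{SR}(s\,J) \supseteq s\,\orc_{SR}(J)$ (or more carefully the fact that $S$ is semicontent over $R$ so that multiplying by an element $s$ with $\orc_{SR}(s) \nsubseteq P$ behaves well after localizing), I want to deduce a containment of the form $t\,\orc_{SR}(\orc_{TS}(g)) \subseteq \orc_{SR}(\orc_{TS}(fg))$ for a suitable $t \in R \setminus P$. The element $s \in S \setminus Q = S \setminus PS$ satisfies $\orc_{SR}(s) \nsubseteq P$, so applying Proposition~\ref{pr:equiv} to $R \ra S$ (with the element $s$ playing the role of ``$f$'') yields some $t \in R \setminus P$ with $t\,\orc_{SR}(h) \subseteq \orc_{SR}(sh)$ for the relevant elements $h$ of $S$. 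Chaining these, and invoking Lemma~\ref{lem:moreformula} to rewrite each $\orc_{SR}(\orc_{TS}(-))$ as $\orc_{TR}(-)$, should give $t\,\orc_{TR}(g) \subseteq \orc_{TR}(fg)$, which is exactly the condition of Proposition~\ref{pr:equiv} for $R \ra T$.

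I expect the main obstacle to be the middle step: translating the containment $s\,\orc_{TS}(g) \subseteq \orc_{TS}(fg)$ of ideals of $S$ into a statement about their $\orc_{SR}$-contents while controlling the scalar $s$. The subtlety is that $\orc_{SR}(s\,J)$ need not literally equal $s\,\orc_{SR}(J)$ for an ideal $J$, so I would likely need to work with a single generator at a time, or localize at $P$ and use that $S_P$ is semicontent over $R_P$ (via Corollary~\ref{cor:local}) to replace the unwieldy ideal containment by the cleaner local equality $\orc(s\,h) = \orc(h)$ valid when $\orc(s)$ becomes a unit. Organizing the argument as a localization at $P$ from the outset — reducing to the case where $\orc_{SR}(s) = R$ — is probably the cleanest route and would make the two applications of the semicontent hypothesis fit together without friction.
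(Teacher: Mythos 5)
Your proposal is correct and takes essentially the same approach as the paper: reduce to the criterion of Proposition~\ref{pr:equiv}, use the primality of $PS$ to invoke the semicontent property of $T$ over $S$, then absorb the resulting scalar $s \in S \setminus PS$ via the semicontent property of $S$ over $R$, chaining the contents together through Lemma~\ref{lem:moreformula}. In particular, the ``cleanest route'' you settle on at the end---localizing at $P$ from the outset via Corollary~\ref{cor:local}, so that $\orc_{SR}(s) = R$ and the scalar is absorbed elementwise---is exactly the paper's argument.
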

\begin{proof} As faithful flatness is transitive, it follows from Lemma~\ref{lem:moreformula} that $T$ is a faithfully flat Ohm-Rush $R$-algebra.  Now we apply Proposition~\ref{pr:equiv} again.  Accordingly, let $P\in \Spec(R)$, $g\in T$ and $f\in T\setminus PT$.  We must find $x\in R\setminus P$ such that $x\orc_{TR}(g) \subseteq \orc_{TR}(fg)$.
By Corollary~\ref{cor:local}  we may assume that $R$ is local with maximal ideal $P$ and that $\orc_{TR}(f) = R$.  We will then show that $\orc_{TR}(g) = \orc_{TR}(fg)$.   Note that the latter ideal is always contained in the former.

 Observe that $PS$ is a prime ideal of $S$, since $S$ is a faithfully flat weak content $R$-algebra.
 Since $T$ is a semicontent $S$-algebra and $g\not\in (PS)T =PT$, there exists $u\in S\setminus PS$ such that $u\orc_{TS}(g) \subseteq \orc_{TS}(fg)$.  Thus \[
    \orc_{SR}(u\orc_{TS}(g))\subseteq \orc_{SR}(\orc_{TS}(fg)). \]
   Moreover, since $u\not\in PS$, where $P$ is the unique maximal ideal of $R$, it follows that $\orc_{SR}(u) =R$.   Thus $ \orc_{SR}(u\orc_{TS}(g)) = \orc_{SR}(\orc_{TS}(g))$.  Therefore
  \[ \orc_{TR}(g) = \orc_{SR}(\orc_{TS}(g)) \subseteq \orc_{SR}(\orc_{TS}(fg)) = \orc_{TR}(fg), \]
   where the two equalities are by Lemma~\ref{lem:moreformula}.   This is sufficient to make the two ideals on the ends equal, giving us the result.
 \end{proof}

\begin{example} \rm Let $R$ be a Noetherian ring and let $X$ and $Y$ be indeterminates.  Let $S= R(X)$, the localization of $R[X]$ at the set of polynomials with content 1, and let $T =S[\![Y]\!]$.   Then $T$ is content $S$-algebra \cite{nmeSh-DMpower} and $S$ is a content $R$-algebra
\cite[Example 6.3(a)]{OhmRu-content}.  It follows from  Theorem~\ref{thm:transitive} that $T$ is a semicontent $R$-algebra.  It is unknown as to whether or not $T$ is a content $R$-algebra.
\end{example}

Finally, we consider the following related property due to Peyman Nasehpour \cite{Nas-ABconj}.

\begin{defn}\label{def:Gauss}
An Ohm-Rush algebra $R \rightarrow S$ is \emph{Gaussian} if for all $f,g \in S$, we have $\orc(fg) = \orc(f)\orc(g)$.
\end{defn}

We now show that the Gaussian property is also transitive.

\begin{thm}\label{thm:Gausstransitive}
Let $S$ be a Gaussian $R$-algebra and let $T$ be a Gaussian $S$-algebra.  Then $T$ is a Gaussian $R$-algebra.
\end{thm}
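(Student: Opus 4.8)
The plan is to leverage the composition formula from Lemma~\ref{lem:moreformula}, which expresses $\orc_{TR}$ as $\orc_{SR}\circ\orc_{TS}$. Since a Gaussian algebra is in particular an Ohm-Rush algebra (Definition~\ref{def:Gauss}), that lemma immediately gives that $T$ is an Ohm-Rush $R$-algebra, so the only remaining task is to verify the multiplicativity $\orc_{TR}(fg)=\orc_{TR}(f)\,\orc_{TR}(g)$ for all $f,g\in T$.

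Rewriting each side via Lemma~\ref{lem:moreformula}, I obtain $\orc_{TR}(fg)=\orc_{SR}(\orc_{TS}(fg))$, and since $T$ is Gaussian over $S$ this equals $\orc_{SR}(\orc_{TS}(f)\,\orc_{TS}(g))$; meanwhile the desired right-hand side is $\orc_{SR}(\orc_{TS}(f))\,\orc_{SR}(\orc_{TS}(g))$. Thus, writing $A:=\orc_{TS}(f)$ and $B:=\orc_{TS}(g)$, which are ideals of $S$, the theorem reduces to the single identity
\[
\orc_{SR}(AB)=\orc_{SR}(A)\,\orc_{SR}(B)
\]
for ideals $A,B$ of the Gaussian $R$-algebra $S$. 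In other words, the Gaussian property for \emph{elements} must be promoted to a Gaussian property for \emph{ideals}, and this is the heart of the argument.

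To prove that identity I would use the formula $\orc_{SR}(J)=\sum_{h\in J}\orc_{SR}(h)$ recorded just before Lemma~\ref{lem:moreformula}. The inclusion $\orc_{SR}(AB)\subseteq \orc_{SR}(A)\,\orc_{SR}(B)$ is the easy one and needs no Gaussian hypothesis: any $c\in AB$ is a finite sum $c=\sum_i a_i b_i$ with $a_i\in A$ and $b_i\in B$, and then subadditivity together with submultiplicativity of the content give $\orc(c)\subseteq \sum_i \orc(a_i b_i)\subseteq \sum_i \orc(a_i)\orc(b_i)\subseteq \orc_{SR}(A)\,\orc_{SR}(B)$. The reverse inclusion is where the hypothesis that $S$ is Gaussian over $R$ actually enters: for $a\in A$ and $b\in B$ one has $\orc(a)\orc(b)=\orc(ab)\subseteq \orc_{SR}(AB)$, and summing over all such $a,b$ yields $\orc_{SR}(A)\,\orc_{SR}(B)\subseteq \orc_{SR}(AB)$.

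With the ideal identity in hand, the chain $\orc_{TR}(fg)=\orc_{SR}(\orc_{TS}(fg))=\orc_{SR}(AB)=\orc_{SR}(A)\,\orc_{SR}(B)=\orc_{TR}(f)\,\orc_{TR}(g)$ closes the proof, where the outer equalities are Lemma~\ref{lem:moreformula}, the second is the Gaussian hypothesis on $T$ over $S$, and the third is the ideal-level identity. The only real obstacle is that ideal-level Gaussian identity, and I expect its entire subtlety to sit in the reverse inclusion, since that is the step that genuinely consumes the hypothesis that $S$ is Gaussian over $R$.
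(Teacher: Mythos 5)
Your proof is correct and is essentially the paper's own argument: both rest on Lemma~\ref{lem:moreformula} together with the formula $\orc_{SR}(J)=\sum_{h\in J}\orc_{SR}(h)$, and your ``hard'' inclusion $\orc_{SR}(A)\,\orc_{SR}(B)\subseteq\orc_{SR}(AB)$ --- expand the product as a double sum over $a\in A$, $b\in B$, apply the elementwise Gaussian property of $S$ over $R$ to get $\orc_{SR}(a)\orc_{SR}(b)=\orc_{SR}(ab)$, and recollect inside $\orc_{SR}(\orc_{TS}(fg))$ via the Gaussian property of $T$ over $S$ --- is exactly the paper's chain of equalities and inclusions. The only cosmetic difference is that you package the computation as an ideal-level Gaussian identity and prove the easy inclusion explicitly, where the paper simply notes that $\orc_{TR}(fg)\subseteq\orc_{TR}(f)\,\orc_{TR}(g)$ is automatic for Ohm-Rush algebras.
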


\begin{proof}
Let $f, g \in T$.  We want to show that $\orc_{TR}(f) \orc_{TR}(g) \subseteq \orc_{TR}(fg)$, since the other containment is automatic.  So pick any $a \in \orc_{TS}(f)$ and $b \in \orc_{TS}(g)$.  Then \[
\orc_{TR}(f) = \orc_{SR}(\orc_{TS}(f)) = \sum_{a \in \orc_{TS}(f)} \orc_{SR}(a),
\]
with the first equality by Lemma~\ref{lem:moreformula} and the second equality by definition of the Ohm-Rush content of an ideal.  The analogous formula holds for $\orc_{TR}(g)$.  Hence, \begin{align*}
\orc_{TR}(f) \orc_{TR}(g) &= \left(\sum_{a \in \orc_{TS}(f)} \orc_{SR}(a)\right) \cdot \left(\sum_{b \in \orc_{TS}(g)} \orc_{SR}(b)\right) \\
&= \sum_{a \in \orc_{TS}(f)} \sum_{b \in \orc_{TS}(g)} \orc_{SR}(a) \orc_{SR}(b) \\
&=  \sum_{a \in \orc_{TS}(f)} \sum_{b \in \orc_{TS}(g)} \orc_{SR}(a\cdot b) \\
&\subseteq \sum_{a, b \text{ as above}} \orc_{SR}\left(\orc_{TS}(f) \cdot \orc_{TS}(g)\right) = \orc_{SR}(\orc_{TS}(fg)) \\
&= \orc_{TR}(fg),
\end{align*}
where the third equality holds because $S$ is Gaussian over $R$, the penultimate equality holds because $T$ is Gaussian over $S$, and the final equality holds by Lemma~\ref{lem:moreformula}, thus completing the proof.
\end{proof}

\section{Power series extensions}\label{sec:power}
We consider the case $S=R[\![X]\!]$, where $X$ is a single variable.  We showed in \cite{nmeSh-DMpower} that if $R$ is Noetherian, then a Dedekind-Mertens type lemma holds for this situation.  We now investigate how our result fits into the Ohm-Rush context.
\begin{defn}\label{def:pscontent}
If $R$ is a commutative ring and $f = \sum_i a_i X^i \in R[\![X]\!]$, then $c(f)$ is the ideal of $R$ generated by the elements $a_i$ -- the `common-sense' content of the power series.
\end{defn}

\begin{lemma}\label{lem:orcc}
Let $R$ be a commutative ring, $S=R[\![X]\!]$, and $f\in S$.  If $J$ is a finitely generated ideal, then $f\in JR[\![X]\!]$ iff $c(f) \subseteq J$.  In fact, $\orc(f)$ is the intersection of all the finitely generated ideals that contain $c(f)$.  Hence, $c(f) \subseteq \orc(f)$, with equality if $R$ is Noetherian.
\end{lemma}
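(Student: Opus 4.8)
The plan is to prove the three assertions in sequence, deriving the latter two formally from the first. The crux is the biconditional for finitely generated $J$, which I would establish by a direct coefficient computation; the rest is bookkeeping with Lemma~\ref{lem:fgdef}.

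First I would prove the forward implication of the biconditional, that $f \in J R[\![X]\!]$ forces $c(f) \subseteq J$, and I would do this for an \emph{arbitrary} ideal $J$. Any element of the extension ideal $J R[\![X]\!]$ is a \emph{finite} sum $\sum_{j} b_j h_j$ with $b_j \in J$ and $h_j \in R[\![X]\!]$; reading off the coefficient of $X^i$ gives $\sum_j b_j (h_j)_i \in J$, so in particular every coefficient of $f$ lies in $J$, i.e.\ $c(f) \subseteq J$. I would note that this direction uses no finiteness at all.

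Next I would prove the reverse implication, $c(f) \subseteq J \implies f \in J R[\![X]\!]$, and here finite generation is essential. Writing $J = (b_1, \dotsc, b_m)$ and $f = \sum_i a_i X^i$, the hypothesis $a_i \in c(f) \subseteq J$ lets me write $a_i = \sum_{k=1}^m r_{ik} b_k$ with $r_{ik} \in R$; regrouping gives $f = \sum_{k=1}^m b_k \bigl(\sum_i r_{ik} X^i\bigr)$, a \emph{finite} $R[\![X]\!]$-combination of the generators $b_k$, whence $f \in J R[\![X]\!]$. The point I would emphasize is that without a finite generating set this regrouping could require infinitely many summands and so need not produce an element of the extension ideal; this is precisely where the hypothesis is spent, and it is the one step where I expect to take the most care.

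With the biconditional in hand, the remaining assertions follow formally. By Lemma~\ref{lem:fgdef}, $\orc(f)$ is the intersection of the finitely generated ideals $I$ with $f \in I R[\![X]\!]$, and by the biconditional these are exactly the finitely generated ideals containing $c(f)$; this yields the stated description of $\orc(f)$. Since each such $I$ contains $c(f)$, the intersection does too, giving $c(f) \subseteq \orc(f)$. Finally, if $R$ is Noetherian then $c(f)$ is itself finitely generated and trivially contains $c(f)$, so it is one of the ideals in the intersection and forces $\orc(f) \subseteq c(f)$, yielding equality.
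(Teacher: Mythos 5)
Your proposal is correct and follows essentially the same route as the paper: the same regrouping argument $f = \sum_k b_k \bigl(\sum_i r_{ik} X^i\bigr)$ for the ``if'' direction, and then Lemma~\ref{lem:fgdef} to identify $\orc(f)$ with the intersection of finitely generated ideals containing $c(f)$. The only difference is that you spell out the ``only if'' direction and the Noetherian case, which the paper leaves as ``clear'' and ``follows'' respectively.
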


\begin{proof}
For the first statement: The `only if' direction is clear.  As for the `if' direction, suppose $c(f) \subseteq J$.  Let $J = (d_1, \dotsc, d_k)$ and let $f = \sum_n a_n X^n$.  Then each $a_n\in J$, so there exist $b_{in} \in R$, $1\leq i \leq k$, such that $a_n = \sum_{i=1}^k b_{in} d_i$, whence \[
f = \sum_n a_n X^n = \sum_n (\sum_{i=1}^k b_{in}d_i) X^n = \sum_{i=1}^k d_i (\sum_n b_{in} X^n) \in JR[\![X]\!].
\]

Now since by the previous lemma $\orc(f)$ is the intersection of those finitely generated ideals $J$ such that $f\in JR[\![X]\!]$, we now know that it is the intersection of those finitely generated ideals $J$ that contain $c(f)$, and the second statement follows.
\end{proof}

\begin{thm}
Let $R$ be a Noetherian ring.  Then $R[\![X]\!]$ is a content $R$-algebra.
\end{thm}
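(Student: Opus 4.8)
The plan is to verify the three requirements packaged into Definition~\ref{def:ca}: that $R \rightarrow R[\![X]\!]$ is faithfully flat, that it is an Ohm-Rush algebra, and that it satisfies the Dedekind-Mertens condition asserting that for all $f, g$ there is some $n \in \N$ with $\orc(f)^n \orc(g) = \orc(f)^{n-1}\orc(fg)$. The first two conditions are where the Noetherian hypothesis gives concrete control, and the third will be imported essentially verbatim from our earlier work \cite{nmeSh-DMpower} once the Ohm-Rush content has been identified with the common-sense content.

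First I would dispose of faithful flatness, which is precisely the place where Noetherianity is indispensable (flatness of $R[\![X]\!]$ over $R$ can fail without it). Since $R$ is Noetherian, so is $R[X]$ by the Hilbert basis theorem, and $R[\![X]\!]$ is the $(X)$-adic completion of $R[X]$; completion of a Noetherian ring along an ideal is flat, so $R[\![X]\!]$ is flat over $R[X]$ and hence over $R$. Flatness together with the observation that $\m R[\![X]\!] \neq R[\![X]\!]$ for every maximal ideal $\m$ of $R$ (no element of $\m R[\![X]\!]$ has a unit constant term) yields faithful flatness. For the Ohm-Rush property, Lemma~\ref{lem:orcc} is decisive: over a Noetherian ring the content $c(f)$ is finitely generated, so the lemma applies with $J = c(f)$ and gives both $\orc(f) = c(f)$ and, via the explicit rewriting in its proof, $f \in c(f)R[\![X]\!] = \orc(f)R[\![X]\!]$. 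Thus $R[\![X]\!]$ is an Ohm-Rush $R$-algebra.

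It remains to establish the Dedekind-Mertens equality, and this is the substantive step. Having identified $\orc$ with the common-sense content $c$ through Lemma~\ref{lem:orcc}, the desired identity $\orc(f)^n \orc(g) = \orc(f)^{n-1}\orc(fg)$ becomes exactly $c(f)^n c(g) = c(f)^{n-1} c(fg)$, which is the Dedekind-Mertens type lemma for power series over a Noetherian ring proved in \cite{nmeSh-DMpower}. I expect the only point needing care to be this translation between the two content functions: one must confirm that $\orc(fg)$ coincides with $c(fg)$ as well, but this is again immediate from Lemma~\ref{lem:orcc} since $R$ is Noetherian, so the cited identity transports without change. The genuine difficulty is therefore absorbed entirely into the prior Dedekind-Mertens result; with that in hand, all three conditions of Definition~\ref{def:ca} hold, and $R[\![X]\!]$ is a content $R$-algebra.
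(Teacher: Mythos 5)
Your proposal is correct and takes essentially the same route as the paper: the Dedekind--Mertens identity is imported from \cite{nmeSh-DMpower} and translated into the Ohm-Rush language via Lemma~\ref{lem:orcc}, and your citation of that lemma for the Ohm-Rush property encapsulates exactly the explicit rewriting the paper carries out by hand. The only divergence is the flatness step, where the paper invokes coherence of the Noetherian ring $R$ (so that $R[\![X]\!] \cong R^{\N}$ as an $R$-module is flat), while you use that $R[\![X]\!]$ is the $(X)$-adic completion of the Noetherian ring $R[X]$; both are standard facts and your faithfulness check (constant terms of elements of $\m R[\![X]\!]$ lie in $\m$) is sound.
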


\begin{proof}
This is essentially a restatement of the main theorem of \cite{nmeSh-DMpower}.  Indeed, that theorem says that for any $f, g \in R[\![X]\!]$ there is some $n$ such that $c(f)^n c(g) = c(f)^{n-1}c(fg)$.  But by Lemma~\ref{lem:orcc}, $\orc = c$ in this situation.  Moreover, $R[\![X]\!]$ is faithfully flat over $R$ because $R$ is coherent.  As for the Ohm-Rush property, let $f = \sum_{k=0}^\infty f_k X^k$ be a power series ($f_k \in R$) and let $a_1, \dotsc, a_n$ be a finite generating set for $c(f)$.  Then each $f_k = \sum_{i=1}^n r_{ik} a_i$ for some $r_{ik} \in R$, and then letting $g_i := \sum_{k=0}^\infty r_{ik} X^i$, we have $f = \sum_{i=1}^n a_i g_i \in (a_1, \dotsc, a_n)R[\![X]\!] = \orc(f) R[\![X]\!]$.
\end{proof}

\begin{lemma}\label{lem:ctable}
Let $R$ be a commutative ring.  Then $R[\![X]\!]$ is an Ohm-Rush $R$-algebra if and only if for any countably generated ideal $I$ of $R$, there is a unique smallest finitely generated ideal $J$ of $R$ such that $I \subseteq J$.  In this case, for any $f\in R[\![X]\!]$, if $c(f)=I$, then $\orc(f)=J$.
\end{lemma}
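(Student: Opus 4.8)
The plan is to prove both directions by exploiting the tight relationship between $c(f)$ and $\orc(f)$ established in Lemma~\ref{lem:orcc}, together with the key fact that the possible values of $c(f)$ as $f$ ranges over $R[\![X]\!]$ are exactly the countably generated ideals of $R$. This last observation is the linchpin: a power series $f = \sum_i a_i X^i$ has $c(f) = (a_0, a_1, a_2, \dotsc)$, and conversely, given any sequence of generators $a_0, a_1, \dotsc$ of a countably generated ideal $I$, the power series $\sum_i a_i X^i$ has content exactly $I$. So the universal quantifier over $f$ translates precisely into a universal quantifier over countably generated ideals $I$.

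For the forward direction, I would assume $R[\![X]\!]$ is Ohm-Rush and fix a countably generated ideal $I$, writing $I = c(f)$ for a suitable $f$. By the definition of the Ohm-Rush property, $f \in \orc(f) R[\![X]\!]$, and by Lemma~\ref{lem:fgdef} applied to the module $R[\![X]\!]$ (combined with Lemma~\ref{lem:orcc}) we know $\orc(f)$ is itself the intersection of the finitely generated ideals containing $c(f) = I$. The crucial step is to show this $\orc(f)$ is actually \emph{finitely generated} and is the smallest finitely generated ideal containing $I$. The Ohm-Rush condition $f \in \orc(f)R[\![X]\!]$ forces, via the `only if' direction of Lemma~\ref{lem:orcc}, that whenever $f \in JR[\![X]\!]$ for finitely generated $J$ we have $\orc(f) \subseteq J$; so if I can exhibit $\orc(f)$ itself as a finitely generated ideal containing $I$, it will be the unique smallest one. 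This is exactly where I expect the main obstacle to lie: I must extract finite generation of $\orc(f)$ from the membership $f \in \orc(f)R[\![X]\!]$. The point is that this membership, unwound through the structure of power series (as in the proof of the preceding theorem), expresses each coefficient $a_i$ in terms of finitely many elements of $\orc(f)$ used as coefficients, but a priori $\orc(f)$ need not be finitely generated. I would argue that $\orc(f)R[\![X]\!] = \bigcap_J JR[\![X]\!]$ over finitely generated $J \supseteq I$, and that membership of $f$ in this intersection being realized by a \emph{single} expression $f = \sum a_i g_i$ with $a_i \in \orc(f)$ pins down a finitely generated subideal that still contains $I$ and is contained in every finitely generated $J \supseteq I$, hence equals $\orc(f)$.

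For the converse, I would assume that every countably generated ideal $I$ has a unique smallest finitely generated ideal $J \supseteq I$, and show $R[\![X]\!]$ is Ohm-Rush. Given $f$ with $c(f) = I$, let $J$ be this smallest finitely generated overideal. By Lemma~\ref{lem:orcc}, $\orc(f)$ is the intersection of all finitely generated ideals containing $c(f) = I$; since $J$ is the unique \emph{smallest} such ideal, it is contained in every member of that intersecting family, so the intersection equals $J$ and thus $\orc(f) = J$. In particular $\orc(f)$ is finitely generated and contains $c(f)$, so by the `if' direction of Lemma~\ref{lem:orcc} we get $f \in \orc(f)R[\![X]\!]$, which is precisely the Ohm-Rush condition. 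This simultaneously yields the final sentence of the statement, namely that $\orc(f) = J$ whenever $c(f) = I$.

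The cleanest way to organize the whole argument is to first prove the identification $\orc(f) = \bigcap\{J \text{ f.g.} \mid c(f) \subseteq J\}$ (already available from Lemma~\ref{lem:orcc}) and then observe that the Ohm-Rush condition for a given $f$ is equivalent to this intersection being attained, i.e.\ being itself a finitely generated ideal. Phrasing the equivalence as ``the intersection of all finitely generated ideals containing $I$ is again finitely generated (equivalently, is a member of the family, equivalently, is a unique smallest element)'' makes both directions fall out uniformly, and I would want to state that reformulation as the pivot before handling the two implications. The delicate point worth double-checking is that ``unique smallest finitely generated ideal containing $I$'' is genuinely equivalent to ``the intersection of all finitely generated ideals containing $I$ is finitely generated'' — the uniqueness is what guarantees the intersection coincides with a single finitely generated ideal rather than merely being squeezed between them.
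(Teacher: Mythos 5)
Your proposal is correct and follows essentially the same route as the paper: both directions rest on Lemma~\ref{lem:orcc}'s identification of $\orc(f)$ as the intersection of the finitely generated ideals containing $c(f)$, together with the observation that the contents $c(f)$ range exactly over the countably generated ideals. The only difference is cosmetic — where the paper simply cites the standard fact that Ohm-Rush content ideals are finitely generated, you prove it inline by noting that the single expression $f=\sum a_i g_i$ with $a_i\in\orc(f)$ yields a finitely generated ideal squeezed between $\orc(f)$ and itself.
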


\begin{proof}
First suppose that $R[\![X]\!]$ is an Ohm-Rush $R$-algebra.  Let $I = (a_n \mid n \in \N_0)$ be a countably generated ideal of $R$, and set $f := \sum_n a_n X^n \in R[\![X]\!]$.  Then $c(f) = I$, so $\orc(f)$ is the intersection of all finitely generated ideals that contain $I$.  But by the Ohm-Rush property, $\orc(f)$ is itself finitely generated, so it must be the smallest such ideal that contains $I$.

Conversely, suppose the given condition holds.  Let $f =\sum_{n=0}^\infty a_n X^n \in R[\![X]\!]$, let $I := (a_n \mid n \in \N_0)$, and let $J$ be the unique smallest finitely generated ideal such that $I \subseteq J$.  Then clearly $\orc(f) = J$, and by the previous lemma, $f \in JR[\![X]\!]$, whence $R[\![X]\!]$ is an Ohm-Rush $R$-algebra
\end{proof}

\begin{prop}\label{pr:corc}
Suppose $R[\![X]\!]$ is an Ohm-Rush $R$-algebra, and let $f \in R[\![X]\!]$.  Then $c(f)$ is finitely generated if and only if $\orc(f) = c(f)$.
\end{prop}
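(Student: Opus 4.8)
The plan is to read off both directions directly from the two preceding lemmas, making no new computation of my own. Specifically, I would lean on Lemma~\ref{lem:orcc}, which tells me that $c(f) \subseteq \orc(f)$ and that $\orc(f)$ is the intersection of all finitely generated ideals of $R$ that contain $c(f)$, together with Lemma~\ref{lem:ctable}, which under the standing hypothesis that $R[\![X]\!]$ is an Ohm-Rush $R$-algebra identifies $\orc(f)$ with the (unique smallest) finitely generated ideal containing $c(f)$.

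For the direction ``$c(f)$ finitely generated $\implies \orc(f) = c(f)$,'' I would observe that $c(f)$ is then itself a finitely generated ideal containing $c(f)$. Since $\orc(f)$ is, by Lemma~\ref{lem:orcc}, the intersection of all finitely generated ideals containing $c(f)$, this intersection is contained in the particular member $c(f)$, giving $\orc(f) \subseteq c(f)$. The reverse containment $c(f) \subseteq \orc(f)$ is also supplied by Lemma~\ref{lem:orcc}, so the two ideals coincide.

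For the converse, I would first record that $c(f)$ is countably generated, since it is generated by the countably many coefficients of the power series $f$. This is exactly what makes Lemma~\ref{lem:ctable} applicable with $I = c(f)$: there is then a (unique smallest) finitely generated ideal $J \supseteq c(f)$ with $\orc(f) = J$, so in particular $\orc(f)$ is finitely generated. Consequently, if $\orc(f) = c(f)$, then $c(f)$ inherits finite generation from $\orc(f)$.

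I do not anticipate a genuine obstacle here: both implications are immediate consequences of the two lemmas. The only point requiring any care is the bookkeeping observation that $c(f)$ is countably generated, which is precisely the hypothesis needed to invoke Lemma~\ref{lem:ctable} in the converse direction.
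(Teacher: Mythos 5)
Your proof is correct and follows essentially the same route as the paper: both directions are read off immediately from the two preceding lemmas. The only cosmetic difference is bookkeeping — for the forward direction the paper cites Lemma~\ref{lem:ctable} where you use the intersection description from Lemma~\ref{lem:orcc}, and for the converse the paper invokes the general fact that Ohm-Rush content ideals are always finitely generated where you instead extract finite generation of $\orc(f)$ from Lemma~\ref{lem:ctable}; these are interchangeable one-line justifications.
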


\begin{proof}
The ``only if" direction follows from the previous lemma.  The ``if'' direction holds because (Ohm-Rush) content ideals are always finitely generated in the context of an Ohm-Rush module.
\end{proof}

\begin{cor}\label{cor:corcN}
The functions $c, \orc: R[\![X]\!] \rightarrow  \{$ideals of $R\}$ coincide if and only if $R$ is Noetherian.
\end{cor}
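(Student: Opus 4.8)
The plan is to prove the two directions separately, reducing everything to the behaviour of finitely generated ideals. The backward implication is immediate and fully general: if $R$ is Noetherian, then the last assertion of Lemma~\ref{lem:orcc} gives $\orc(f) = c(f)$ for every $f \in R[\![X]\!]$, so the two functions agree. The work is all in the forward direction.

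For that, I would first record the reformulation that makes the two functions comparable. By Lemma~\ref{lem:orcc}, $\orc(f)$ is always the intersection of the finitely generated ideals of $R$ that contain $c(f)$; and as $f = \sum_n a_n X^n$ ranges over $R[\![X]\!]$, the ideal $c(f)$ ranges over \emph{exactly} the countably generated ideals of $R$. Hence the hypothesis ``$c = \orc$'' says precisely that every countably generated ideal $I$ equals the intersection of the finitely generated ideals containing it. Since $R$ is Noetherian if and only if every countably generated ideal is finitely generated (otherwise a strictly ascending chain of finitely generated ideals has a countably generated, non-finitely-generated union), it suffices to upgrade ``$I$ is an intersection of finitely generated ideals'' to ``$I$ is finitely generated''.

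The mechanism for this upgrade is Proposition~\ref{pr:corc}: once we know $R[\![X]\!]$ is an Ohm-Rush algebra, the equality $\orc(f) = c(f)$ forces $c(f)$ to be finitely generated. Applying this to every $f$ then shows every countably generated ideal is finitely generated, so $R$ is Noetherian. Thus the real content of the forward direction is the input that $R[\![X]\!]$ is Ohm-Rush; by Lemma~\ref{lem:ctable} this amounts to saying that every countably generated ideal admits a (unique) smallest finitely generated ideal containing it, and in that case $\orc(f)$ is itself finitely generated, so it is that smallest overideal and must coincide with $c(f)$, closing the loop.

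I expect this last point to be the main obstacle, and it is exactly where one must be careful rather than formal. The bare equality $\orc(f) = c(f)$ does \emph{not} by itself produce a smallest finitely generated overideal: a non-finitely-generated ideal can perfectly well equal the intersection of the (strictly larger) finitely generated ideals lying above it, so one cannot simply read finite generation off from $c = \orc$. Consequently the tempting contrapositive route — assume $R$ non-Noetherian and manufacture an $f$ with $c(f) \subsetneq \orc(f)$ — is delicate, since a naively chosen countably generated non-finitely-generated ideal may already be its own finitely-generated closure. The clean way around this is to let the Ohm-Rush property (the standing hypothesis of Proposition~\ref{pr:corc}) do the essential work, via the criterion of Lemma~\ref{lem:ctable}, and then combine Proposition~\ref{pr:corc} with the observation that $c$ takes every countably generated ideal as a value. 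The write-up should therefore make explicit precisely where the Ohm-Rush hypothesis enters.
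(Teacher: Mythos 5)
Your proposal is, in structure, exactly the paper's proof: the backward direction via the final assertion of Lemma~\ref{lem:orcc}, and the forward direction by combining the facts that $c(f)$ ranges over precisely the countably generated ideals, that Proposition~\ref{pr:corc} converts the equality $c(f)=\orc(f)$ into finite generation of $c(f)$, and that a ring in which every countably generated ideal is finitely generated is Noetherian (the ascending chain argument). So there is no divergence of method to report.

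What is worth reporting is the obstacle you flag, because you have diagnosed it correctly and the paper glosses over it. Proposition~\ref{pr:corc} carries the standing hypothesis that $R[\![X]\!]$ is an Ohm-Rush $R$-algebra; the corollary as literally stated does not, and the paper's proof invokes Proposition~\ref{pr:corc} anyway---that is, it silently reads the corollary as inheriting the Ohm-Rush hypothesis. Your observation that the bare equality $c=\orc$ cannot manufacture this hypothesis (a non-finitely-generated ideal can equal the intersection of the finitely generated ideals above it) is not merely a caution: without the hypothesis the forward implication is actually false. Let $V$ be a valuation ring whose value group is $\Z\times\Z$ ordered lexicographically. Every nonprincipal ideal of $V$ has the form $Q_a=\{x\in V : v(x)\geq (a,n)\text{ for some }n\in\Z\}$ with $a\geq 1$; each $Q_a$ is countably generated, not finitely generated, and equals the intersection of the principal ideals containing it (namely those $(c)$ with $v(c)\leq (a-1,m)$ for some $m$). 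Hence $c=\orc$ on $V[\![X]\!]$ by Lemma~\ref{lem:orcc}, yet $V$ is two-dimensional and not Noetherian; consistently with Theorem~\ref{thm:gauss}, $V[\![X]\!]$ is not Ohm-Rush here, which is exactly why Proposition~\ref{pr:corc} cannot be applied. So the resolution you propose---state the Ohm-Rush hypothesis explicitly and let it do the work---is not pedantry but necessary: with that hypothesis your argument is complete and coincides with the paper's, and without it no argument could succeed, since the statement itself fails.
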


\begin{proof}
The set of ideals $\{c(f) \mid f \in R[\![X]\!]\}$ consists of all the finitely and countably generated ideals of $R$.  So by Proposition~\ref{pr:corc}, the functions $c$ and $\orc$ will coincide if and only if every countably generated ideal is finitely generated.  But we show below that this latter condition is the same as Noetherianness.

To see this, assume that every countably generated ideal is finitely generated, and suppose by way of contradiction that $R$ is not Noetherian.  Then there is a strictly ascending chain of ideals $(0)=I_0 \subsetneq I_1 \subsetneq I_2 \subsetneq \cdots$.  For each $j\geq 1$, let $a_j \in I_j \setminus I_{j-1}$, and set $H := (a_1, a_2, a_3, \ldots)$.  Then $H$ is countably generated, hence finitely generated, so $H=(a_1, \dotsc, a_n)$ for some $n$.  But then $a_{n+1} \in H =(a_1, \dotsc, a_n) \subseteq I_n$, yielding the desired contradiction.
\end{proof}

We now turn to power series over valuation rings.  First a fairly general fact, whose proof follows the outline of the traditional development of Gauss's lemma.  For this, recall that an integral domain $R$ is called \emph{Pr\"ufer} if $R_\m$ is a valuation ring for all maximal ideals $\m$ of $R$.

\begin{prop}\label{pr:Pruf}
Let $R$ be a Pr\"ufer domain (e.g. any Dedekind domain), and let $S$ be a faithfully flat Ohm-Rush algebra over $R$.  Suppose that for all $\m \in \Max R$, $\m S \in \Spec S$.  Then $S$ is a Gaussian $R$-algebra.
\end{prop}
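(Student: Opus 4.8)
The plan is to prove the reverse inclusion $\orc(f)\orc(g)\subseteq\orc(fg)$ (the other inclusion holds automatically for Ohm-Rush algebras) by reducing to the case of a valuation ring and then following the classical proof of Gauss's lemma. First I would note that both ideals are finitely generated, since content ideals of an Ohm-Rush algebra are, so the desired equality may be tested after localizing at an arbitrary maximal ideal $\m$ of $R$. By Lemma~\ref{lem:formula}, content commutes with localization and $S_\m$ is again a faithfully flat Ohm-Rush $R_\m$-algebra; since faithful flatness gives $\m S\cap R=\m$, the ideal $\m S$ meets $R\setminus\m$ trivially, so $(\m R_\m)S_\m=(\m S)_\m$ remains prime. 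Because $R$ is Pr\"ufer, $R_\m$ is a valuation ring, so I may assume from the outset that $R=V$ is a valuation ring with maximal ideal $\m$ and that $\m S\in\Spec S$.

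Next, working over $V$, I would exploit that $V$ is a B\'ezout domain, so the finitely generated content ideals are principal: write $\orc(f)=(a)$ and $\orc(g)=(b)$, after discarding the trivial case $f=0$ or $g=0$ (where both sides vanish). Here $a\neq 0$, since $\orc(f)=0$ would force $f\in 0\cdot S=0$. As $S$ is Ohm-Rush, $f\in\orc(f)S=aS$, so $f=af'$, and the scalar relation $\orc(cy)=c\,\orc(y)$ for $c\in R$ (from the proof of Lemma~\ref{lem:formula}) gives $a\,\orc(f')=\orc(af')=\orc(f)=aV$; cancelling $a$ in the domain $V$ yields $\orc(f')=V$, and similarly $g=bg'$ with $\orc(g')=V$. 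This ``peels off'' the content and reduces the whole problem to showing that a product of two primitive elements is primitive, i.e.\ $\orc(f'g')=V$.

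For that final step I would argue by contradiction using the prime-ideal hypothesis, exactly as in Gauss's lemma: if $\orc(f'g')\neq V$ then, $V$ being local, $\orc(f'g')\subseteq\m$, so $f'g'\in\orc(f'g')S\subseteq\m S$; primeness of $\m S$ forces $f'\in\m S$ or $g'\in\m S$, hence $\orc(f')\subseteq\m$ or $\orc(g')\subseteq\m$, contradicting primitivity. Thus $\orc(f'g')=V$, and applying the scalar relation once more to $fg=ab\,f'g'$ gives $\orc(fg)=ab\,\orc(f'g')=abV=(a)(b)=\orc(f)\orc(g)$, as desired.

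The main obstacle I anticipate is organizing the reduction cleanly rather than the core argument itself: in particular, verifying that the hypothesis $\m S\in\Spec S$ descends to the localization (via $\m S\cap R=\m$) and that the B\'ezout-plus-cancellation step is legitimate, which crucially uses that $V$ is a domain so that peeling off $a$ is valid. Once the problem has been reduced to primitive elements over a valuation ring, the use of the prime ideal is routine and mirrors the standard Gaussian argument.
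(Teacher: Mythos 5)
Your proposal is correct and follows essentially the same route as the paper's proof: localize at a maximal ideal to reduce to a valuation ring (using Lemma~\ref{lem:formula}), write $\orc(f)=(a)$, $\orc(g)=(b)$, peel off the contents via the scalar relation $\orc(r h)=r\orc(h)$ and cancellation in the domain, and finish the primitive case by the Gauss-style argument using primeness of $\m S$. The only differences are organizational (you localize first and the paper does the local case first, then globalizes), and you make explicit two small points the paper leaves implicit — that $\m S\cap R=\m$ by faithful flatness so the prime hypothesis descends to $S_\m$, and the disposal of the case $f=0$ or $g=0$.
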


\begin{proof}
First suppose that $(R,\m)$ is local (i.e. a valuation domain).  Let $f, g$ be nonzero elements of $S$.  Since $\orc(f)$ and $\orc(g)$ are finitely generated ideals of $R$, they must be principal; say $\orc(f) = (a)$ and $\orc(g)=(b)$.  Then $f \in aS$ and $g \in bS$; say $f=af'$ and $g=bg'$.  Recall \cite[Corollary 1.6]{OhmRu-content} that since $R \rightarrow S$ is a flat Ohm-Rush algebra, we have $\orc(rh)= r\orc(h)$ for any $r\in R$ and $h\in S$.  Thus $\orc(fg) = \orc(af'bg') = ab\orc(f'g')$, and $\orc(f)\orc(g) = \orc(af')\orc(bg') = a\orc(f')b\orc(g') = ab\orc(f')\orc(g')$.  Therefore since $ab$ is a nonzero element of a domain (and hence regular), we have $\orc(fg) = \orc(f)\orc(g) \iff \orc(f'g') = \orc(f')\orc(g')$.  But $aR = \orc(f) = a\orc(f')$ and $bR = \orc(g) = b\orc(g')$, so since both $a$, $b$ are nonzero, we have $\orc(f') = \orc(g') = R$.  Thus, we may replace $f$, $g$ by $f'$, $g'$ respectively and thereby assume that $\orc(f)=\orc(g)=R$.  If $\orc(fg) \neq R$, then $\orc(fg) \subseteq \m$.  But then $fg \in \m S$ by the Ohm-Rush property, and $\m S \in \Spec S$, so without loss of generality $f \in \m S$.  But then $\orc(f) \subseteq \m$, a contradiction.  Hence, $\orc(fg) = R = \orc(f)\orc(g)$.

In the general case, let $f, g$ be nonzero elements of $R$, and let $\m \in \Max R$.  By Lemma~\ref{lem:formula}, we have that $S_\m$ is a faithfully flat Ohm-Rush algebra over the valuation domain $R_\m$, and that $\orc_{S_\m R_\m}(f/1) = (\orc_{SR}(f))_\m$, and similarly for $g$.  Then from the local case, we have \begin{align*}
\orc_{SR}(fg)_\m &= \orc_{S_\m R_\m}(fg/1) = \orc((f/1)(g/1)) = \orc_{S_\m R_\m}(f/1) \orc_{S_\m R_\m}(g/1) \\
&= \orc_{SR}(f)_\m \orc_{SR}(g)_\m = (\orc_{SR}(f)\orc_{SR}(g))_\m.
\end{align*}
Hence, $\orc_{SR}(fg) = \orc_{SR}(f)\orc_{SR}(g)$.
\end{proof}

Combined with an old result, we get a new characterization of Pr\"ufer domains.
\begin{thm}\label{thm:Pr}
Let $R$ be an integral domain.  Then $R$ is Pr\"ufer iff for any faithfully flat Ohm-Rush $R$-algebra $S$ such that $\m S \in \Spec S$ for all $\m \in \Max R$, $S$ is a Gaussian $R$-algebra.
\end{thm}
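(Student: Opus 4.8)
The plan is to treat the two implications separately, recognizing that the forward direction is already in hand. For the ``only if'' direction, suppose $R$ is Pr\"ufer and let $S$ be any faithfully flat Ohm-Rush $R$-algebra with $\m S \in \Spec S$ for every $\m \in \Max R$. This is precisely the hypothesis of Proposition~\ref{pr:Pruf}, whose conclusion is that $S$ is Gaussian over $R$. So no new work is needed here; the entire substance of the theorem lies in the converse.

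For the ``if'' direction, I would assume the stated universal property and produce a single well-chosen test algebra that forces $R$ to be Pr\"ufer. The natural candidate is the polynomial extension $S = R[X]$. First I would check that $R[X]$ satisfies all the hypotheses placed on $S$: it is faithfully flat (being free over $R$); it is Ohm-Rush, since for a polynomial $f$ the Ohm-Rush content $\orc(f)$ coincides with the ordinary content $c(f)$ (the finitely generated ideal of coefficients), because $f \in IR[X]$ forces every coefficient of $f$ into $I$; and for each $\m \in \Max R$ the extension $\m R[X]$ is prime, since $R[X]/\m R[X] \cong (R/\m)[X]$ is a domain. Thus $R[X]$ is one of the algebras $S$ to which the hypothesis applies, and we conclude that $R[X]$ is Gaussian over $R$, i.e.\ $c(fg) = c(f)c(g)$ for all $f, g \in R[X]$.

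It then remains to invoke the classical characterization of Pr\"ufer domains --- the ``old result'' alluded to above --- namely that an integral domain $R$ for which content is multiplicative on $R[X]$ (Gauss's lemma holds for all pairs of polynomials) is necessarily Pr\"ufer. Combining this with the previous paragraph yields that $R$ is Pr\"ufer, which completes the converse and hence the theorem.

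I expect the main obstacle to be expository rather than mathematical: one must carefully confirm that $R[X]$ genuinely qualifies as an instance of the hypothesis --- in particular that the Ohm-Rush content on $R[X]$ really is the everyday content $c$, and that the prime-extension condition $\m R[X] \in \Spec S$ holds at \emph{every} maximal ideal --- and then pinpoint the precise classical statement (multiplicativity of content on $R[X]$ implies Pr\"ufer) to cite. No delicate estimate or construction is required; the work is entirely in verifying the hypotheses for the test algebra $R[X]$ and correctly identifying the external result that does the heavy lifting.
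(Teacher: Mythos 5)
Your proposal is correct and follows essentially the same route as the paper: the forward direction is exactly Proposition~\ref{pr:Pruf}, and the converse uses the test algebra $S = R[X]$ together with Tsang's classical theorem that multiplicativity of content on $R[X]$ forces $R$ to be Pr\"ufer. The only difference is that you spell out the verification that $R[X]$ satisfies the hypotheses (faithful flatness, $\orc = c$, and primeness of $\m R[X]$), which the paper dismisses as clear.
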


\begin{proof}
The ``only if'' direction is Proposition~\ref{pr:Pruf}.  For the other direction, let $S=R[X]$.  Clearly the algebra $R \rightarrow S=R[X]$ satisfies the condition on maximal ideals as well as the Ohm-Rush condition, so by assumption, $S$ is Gaussian over $R$.  That is, for all $f, g \in R[X]$, we have $\orc(f)\orc(g) = \orc(fg)$, i.e. $c(f)c(g) = c(fg)$.  But Tsang \cite{Ts-Gauss} showed that any integral domain $R$ with the latter property must be Pr\"ufer.
\end{proof}

\begin{thm}\label{thm:glb}
Let $V$ be a valuation ring with value group $G$. Then $V[\![X]\!]$ is an Ohm-Rush $V$-algebra if and only if any countable subset of positive elements of $G$ has a greatest lower bound in $G$.
\end{thm}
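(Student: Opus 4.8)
The plan is to derive the theorem from Lemma~\ref{lem:ctable}, which reduces the Ohm-Rush property of $V[\![X]\!]$ to a purely ideal-theoretic condition on $V$: every countably generated ideal $I$ of $V$ is contained in a unique smallest finitely generated ideal. Because $V$ is a valuation ring, its ideals are totally ordered by inclusion, so every finitely generated ideal is principal, and any smallest ideal with a given property is automatically unique. Thus the condition becomes: every countably generated ideal of $V$ is contained in a smallest principal ideal. The substance of the proof is translating this statement through the valuation $v$ into the asserted property of $G$.

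To build the dictionary I would use that $v$ restricts to a surjection from $V \setminus \{0\}$ onto $G_{\geq 0}$, and that $(a) \mapsto v(a)$ is an order-reversing bijection between the nonzero principal ideals of $V$ and $G_{\geq 0}$. The key observation is that for a countably generated ideal $I = (a_n \mid n)$ with value set $S := \{v(a_n) \mid a_n \neq 0\} \subseteq G_{\geq 0}$, a nonzero principal ideal $(d)$ contains $I$ if and only if $v(d)$ is a lower bound for $S$ in $G$. Hence the set of values $v(d)$ of principal ideals containing $I$ is exactly the set of \emph{non-negative} lower bounds of $S$, and a smallest principal ideal containing $I$ exists precisely when this set has a maximum. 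I would then verify that the non-negative lower bounds of $S$ have a maximum if and only if $S$ has a greatest lower bound in $G$ (using that $0$ is always a lower bound of a positive set $S$, so the glb, when it exists, is itself $\geq 0$), and that in this case the smallest ideal's generator $d$ satisfies $v(d) = \inf S$.

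For the forward direction, given a countable set of positive elements $S = \{g_n\} \subseteq G_{>0}$, I would realize it as a value set by choosing $a_n \in V$ with $v(a_n) = g_n$ and setting $I := (a_n \mid n)$; the Ohm-Rush hypothesis furnishes the smallest principal $(d) \supseteq I$, and I would check $v(d) = \inf S$ by showing every lower bound $\gamma$ of $S$ satisfies $\gamma \leq v(d)$ — using minimality of $(d)$ when $\gamma > 0$, and $v(d) \geq 0$ when $\gamma \leq 0$. For the converse, given a countably generated $I$, I would dispose of the trivial cases $I = 0$ and $I = V$ (the latter occurring when some generator is a unit, i.e.\ $0 \in S$), and otherwise apply the hypothesis to $S \subseteq G_{>0}$ to obtain $\gamma = \inf S \in G_{\geq 0}$; choosing $d$ with $v(d) = \gamma$ gives a principal ideal containing $I$, minimal because any principal $(e) \supseteq I$ has $v(e)$ a non-negative lower bound of $S$, hence $v(e) \leq \gamma$.

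I expect the main obstacle to be bookkeeping rather than conceptual: one must carefully reconcile the word \emph{positive} in the statement with the \emph{non-negative} values that arise as ideal contents, correctly handling the boundary cases where $0 \in S$ or $S$ is empty, and confirm in both directions that the greatest lower bound taken in the full group $G$ matches the value of the smallest principal ideal — in particular that every lower bound, positive or not, is dominated by $v(d)$, so that no spurious loss occurs by restricting attention to non-negative bounds.
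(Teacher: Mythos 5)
Your proposal is correct and takes essentially the same route as the paper: both reduce the problem via Lemma~\ref{lem:ctable} to the existence of a smallest principal ideal containing each countably generated ideal of $V$, and then translate this through the valuation $v$ by choosing elements with prescribed values in both directions. The only difference is that you explicitly handle the boundary cases ($I=0$, $I=V$, and the distinction between positive and non-negative lower bounds), which the paper's proof passes over in silence.
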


\begin{proof}
First suppose $V[\![X]\!]$ is an Ohm-Rush $V$-algebra.  Let $\{g_n \mid n \in \N_0\}$ be a countable set of positive elements of $G$, let $a_n \in V$ with $v(a_n) = g_n$ for all $n$, and let $I = (a_n \mid n \in \N_0)$.  Then by Lemma~\ref{lem:ctable}, there is a unique smallest finitely generated ideal $J$ containing $I$.  But since $V$ is a valuation ring, $J=(a)$ for some $a\in V$.  Then $v(a) = \inf\{v(a_n) \mid n \in \N_0\}$.

Conversely, suppose the greatest lower bound property holds.  Let $I=(a_n \mid n \in \N_0)$ be a countably generated ideal of $V$.  Let $u= \inf\{v(a_n) \mid n\in \N_0\}$; this exists in $G$ by hypothesis.  Let $a\in V$ with $v(a)=u$.  Then $I \subseteq (a)$.  Moreover, suppose there is some finitely generated (i.e., principal) ideal $(b)$ such that $I \subseteq (b)$.  Then for each $n$, we have $a_n = bc_n$ for some $c_n \in V$.  Let $c\in V$ with $v(c) = \inf_n v(c_n)$.  Then $v(a) = v(bc)$, whence $a$ is a multiple of $b$, so that $(a)$ is the smallest finitely generated ideal that contains $I$.
\end{proof}

We note that the above condition on the group $G$ is equivalent to the statement that every countable set of principal ideals in $V$ has a least upper bound in the ordered set of principal ideals of $V$.

\begin{thm} \label{thm:gauss}
Let $V$ be a valuation ring of dimension at least $2$.  Then $V[\![X]\!]$ is not an Ohm-Rush $V$-algebra.
\end{thm}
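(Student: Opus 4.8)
The plan is to apply the characterization of Theorem~\ref{thm:glb}: $V[\![X]\!]$ is an Ohm-Rush $V$-algebra precisely when every countable set of positive elements of the value group $G$ admits a greatest lower bound in $G$. Thus it suffices to produce a single countable set of positive elements of $G$ with no greatest lower bound. To exploit the hypothesis $\dim V \ge 2$, I would pass through the standard order-reversing correspondence between the prime ideals of $V$ and the convex subgroups of $G$: a chain $0 \subsetneq \p \subsetneq \m$ of primes yields a convex subgroup $H$ with $\{0\} \subsetneq H \subsetneq G$. Crucially, I would take $\p$ to be the largest prime properly contained in $\m$, so that $H$ is the smallest nonzero convex subgroup of $G$ and is therefore archimedean.

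With $H$ in hand, fix $h \in H$ with $h > 0$ and fix $g \in G$ with $g > 0$ and $g \notin H$; such a $g$ exists because $H \subsetneq G$, and since $H$ is convex, $g$ exceeds every element of $H$. I would then consider the countable set $S = \{\, g - nh \mid n \in \N \,\}$. Each element is positive, since $nh \in H$ forces $nh < g$, and the sequence is strictly decreasing because $h > 0$. The claim to establish is that $S$ has no greatest lower bound in $G$.

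To verify this I would compute the lower bounds of $S$ by reducing modulo $H$ in the quotient ordered group $G/H$, writing $\bar x$ for the image of $x$. Every $x$ with $\bar x < \bar g$ is a lower bound, since $\overline{g-nh}=\bar g$ forces $x < g-nh$; on the other hand any $x$ with $\bar x = \bar g$ has the form $x = g - c$ with $c \in H$, and is a lower bound exactly when $nh \le c$ for all $n$, which is impossible because $H$ is archimedean. Hence the set of lower bounds is exactly $\{\, x \mid \bar x < \bar g \,\}$, and this set has no greatest element: for any such $x$, the element $x + h$ satisfies $\overline{x+h}=\bar x < \bar g$ and $x + h > x$, so it is a strictly larger lower bound. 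Therefore $S$ has no greatest lower bound, and Theorem~\ref{thm:glb} gives the conclusion.

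I expect the main obstacle to be precisely the step that forces $H$ to be archimedean. If $H$ were permitted to be non-archimedean, the multiples $\{nh\}$ could be bounded above in $H$ and even have a supremum there, in which case $S$ would acquire a greatest lower bound and the argument would collapse; this is why selecting $H$ as a minimal nonzero convex subgroup (equivalently, $\p$ as the largest non-maximal prime) is the essential point. In finite dimension such an $H$ always exists, which covers the situation relevant to the rest of the section; for value groups whose convex subgroups form a chain with no minimal nonzero member, one would need to replace $S$ by a more delicate countable set chosen to defeat the completeness of $G$, and I would expect that refinement to be the technically hardest part of a fully general argument.
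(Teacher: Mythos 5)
Your core computation is correct, and it takes a genuinely different route from the paper's. The paper stays on the ideal-theoretic side: it uses the reformulation (stated after Theorem~\ref{thm:glb}) that the Ohm-Rush property amounts to a least-upper-bound property for countable families of principal ideals, invokes Kaplansky's theorem to produce \emph{adjacent} primes $Q \subset P$ with $P$ non-maximal, passes to $V/Q$ to reduce to a height-one non-maximal prime, and then splits into two cases according to whether $V_P$ is a DVR or has value group dense in $\R$. You instead transfer everything to the value group via the primes--convex subgroups correspondence and defeat the greatest-lower-bound criterion of Theorem~\ref{thm:glb} with one explicit family $\{g-nh\}$; where it applies, your argument needs neither the case split nor the passage to a quotient ring, and your verification of which elements are lower bounds (via the quotient order on $G/H$) is correct.

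However, there is a genuine gap, which you flag but misjudge in both directions. Your construction needs $\m$ to have an immediate predecessor in $\Spec V$ (equivalently, a minimal nonzero convex subgroup of $G$), and the theorem has no hypothesis guaranteeing this: if the nonzero convex subgroups of $G$ form a strictly decreasing chain with zero intersection --- e.g.\ $G=\bigoplus_{n\in\N}\Z$ ordered lexicographically, whose nonzero proper convex subgroups are exactly the tails $H_k=\{f : f(i)=0 \text{ for all } i<k\}$, $k\geq 1$, realized as a value group by Krull's theorem --- then no minimal one exists, yet $\dim V$ is infinite, hence at least $2$. This case is also not dispensable for the paper: Corollary~\ref{cor:1dimV} deduces $\dim V=1$ from the Ohm-Rush property for an \emph{arbitrary} valuation ring with nontrivial value group, so the theorem is needed in full generality. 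Fortunately the repair is far smaller than the ``more delicate countable set'' you anticipate: your argument never uses that $H$ is archimedean, only that the multiples $nh$ are cofinal in $H$, and you can force that by building $H$ around $h$ rather than hoping $H$ is minimal. Concretely, pick any proper nonzero convex subgroup $H_0\subsetneq G$ (this is exactly the hypothesis $\dim V\geq 2$), pick $h\in H_0$ with $h>0$, and set $H:=\{x\in G : |x|\leq nh \text{ for some } n\in\N\}$, the convex subgroup generated by $h$. Then $\{0\}\neq H\subseteq H_0\subsetneq G$, and by construction no $c\in H$ satisfies $nh\leq c$ for all $n$. Every step of your proof --- positivity of each $g-nh$, the identification of the lower bounds of $S$ with $\{x : \bar{x}<\bar{g}\}$, and the absence of a greatest element in that set --- uses only convexity of $H$, $h>0$, cofinality of $\{nh\}$ in $H$, and $g>0$ with $g\notin H$; so it goes through verbatim, and with this one change your proof is complete in the stated generality.
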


\begin{proof} As noted above, it suffices to show that there exist a countable set of principal ideals that does not have a least upper bound (in the set of principal ideals of $V$).  Since $\dim V\geq 2$, it follows from \cite[Theorem 11]{Kap-CR} that there exists $Q, P \in \Spec V$, such that $Q\subset P$ are adjacent primes where $P$ is not the maximal ideal.

 For now we will assume that $P$ is a height one prime ideal of $V$ that is not the maximal ideal $\m$.  We break this into two cases.  In the first, we suppose that $V_P$ (which necessarily has dimension one) is a DVR.   Thus $PV_P$ is principal.   Let $a\in P$ be such that the element $\frac{a}{1}\in V_P$ generates $PV_P$.   Let $b\in \m\setminus P$ and consider the set of principal ideals $\{(\frac{a}{b^m})\}_{m\geq 0}$ in $V$.  Suppose that $(c)$ is an upper bound of this set (in the collection of principal ideals of $V$).   Then $(cb)$ is a smaller upper bound on this set, since for any $m\geq 0$, we have $\frac{a}{b^{m+1}} \in (c)$, whence $\frac{a}{b^m} \in (cb)$, and since $(cb) \subsetneq (c)$.  Hence there is no least upper bound.

Next suppose that $V_P$ is not discrete, in which case the value group of $V_P$ must be a dense subgroup of the reals \cite[Theorem 10.7]{Mats}. Hence there is a set of elements $\{a_i\}_{i\in \mathbb{N}} \in P$ such that the images of the $a_i$'s in $V_P$ generate $PV_P$.  Let $(c)$ be an upper bound for the set of ideals $\{(a_i)\}$.  If $c\in P$, then $PV_P = \bigcup (\frac{a_i}{1}) \subseteq (\frac{c}{1}) \subseteq PV_P$, which shows that $PV_P$ is a principal ideal -- a contradiction.   Hence $c\not\in P$.   If $c$ is a unit, then $(c)$ is not a least upper bound in the set of principal ideals, since for any $d \in \m \setminus P$, $(d)$ is an upper bound and $(d) \subsetneq R = (c)$.   Finally let $c\in \m\setminus P$, in which case $(c^2) \subset (c)$ is a smaller upper bound.   Thus, the  set of ideals $\{(a_i)\}$ does not have a least upper bound.

For the general case, let $P$ be an arbitrary nonmaximal prime ideal of $V$ that has an immediate predecessor $Q$ in $\Spec R$.  Pass to the ring $V/Q$.   Then by the above, there exists a countable set of principal ideals $\{(a_i)\}_{i>0}$ in $V/Q$ that does not have a least upper bound in the set of principal ideals of $V/Q$.  Let $b_i \in V$ have image $a_i$ in $V/Q$.  As $V$ is a valuation ring, $Q \subset (b_i)$ for all $i$.  It follows that the set $\{(b_i)\}$ does not have a least upper bound in the set of principal ideals of $V$, and so our proof is complete.
\end{proof}

\begin{cor}\label{cor:1dimV}
Let $V$ be a valuation ring  with nontrivial value group $G$.  Then the following are equivalent:\begin{enumerate}[label=(\alph*)]
\item $V[\![X]\!]$ is an Ohm-Rush $V$-algebra.
\item $V[\![X]\!]$ is a Gaussian $V$-algebra.
\item $G \cong \R$ or $G \cong \Z$.
\end{enumerate}
\end{cor}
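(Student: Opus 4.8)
The plan is to prove the cycle of implications (b) $\Rightarrow$ (a) $\Rightarrow$ (c) $\Rightarrow$ (b). The implication (b) $\Rightarrow$ (a) is immediate, since by Definition~\ref{def:Gauss} a Gaussian algebra is by fiat an Ohm-Rush algebra; the substance lies in the other two implications.

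For (a) $\Rightarrow$ (c), I would first invoke Theorem~\ref{thm:gauss} to conclude that $\dim V \leq 1$; since $G$ is nontrivial, $V$ is not a field, so $\dim V = 1$ and the valuation has rank one. By H\"older's theorem $G$ then embeds as an ordered subgroup of $\R$, and every such subgroup is either infinite cyclic (whence $G \cong \Z$) or dense in $\R$. In the dense case I would apply the greatest-lower-bound criterion of Theorem~\ref{thm:glb}: if $G \subsetneq \R$ were a proper dense subgroup, choose $r \in \R \setminus G$ with $r > 0$ and a decreasing sequence $g_n \in G$ of positive elements with $g_n \downarrow r$; any lower bound $g \in G$ then satisfies $g < r$, and density produces a strictly larger element of $G$ still below $r$, so $\{g_n\}$ has no greatest lower bound in $G$, contradicting Theorem~\ref{thm:glb}. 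Hence $G = \R$, giving (c).

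For (c) $\Rightarrow$ (b), note first that (c) $\Rightarrow$ (a), since both $\Z$ and $\R$ satisfy the condition of Theorem~\ref{thm:glb} (a nonempty set of positive integers has a least element, and any subset of $\R$ bounded below has an infimum). I would then assemble the hypotheses of Proposition~\ref{pr:Pruf}: $V$ is a valuation domain, hence Pr\"ufer; $V[\![X]\!]$ is faithfully flat over $V$ because valuation domains are coherent (finitely generated ideals are principal, hence free, hence finitely presented) and power series over a coherent ring are flat; and $V[\![X]\!]$ is Ohm-Rush by the previous step. The only remaining hypothesis is that $\m S \in \Spec S$, where $\m$ is the maximal ideal; once that is established, Proposition~\ref{pr:Pruf} delivers the Gaussian property and closes the cycle.

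The main obstacle is precisely showing $\m S \in \Spec S$. Combining Lemma~\ref{lem:orcc} with the Ohm-Rush property, one has $f \in \m S \iff \orc(f) \subseteq \m \iff \inf_k v(a_k) > 0$ for $f = \sum_k a_k X^k$; thus $\m S$ is prime exactly when $\orc(f) = \orc(g) = V$ forces $\orc(fg) = V$, i.e.\ when $f,g$ of unit content have product of unit content. For $G \cong \Z$ this is painless: $V$ is a DVR, $\m = (\pi)$ is principal, and $S/\m S \cong (V/\pi)[\![X]\!]$ is a domain. The delicate case is $G \cong \R$, where the infimum $\inf_k v(a_k)$ need not be attained (no coefficient need be a unit), so the classical Gauss-lemma argument via a least-index unit coefficient breaks down. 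Here I would argue by a Newton-polygon / dominant-term method: given $\orc(f) = \orc(g) = V$ and $\epsilon > 0$, tilt by a generic small slope $s > 0$ and let $i(s)$ and $j(s)$ be the unique minimizers of $v(a_i) + s i$ and $v(b_j) + s j$; these occur at vertices of the respective lower convex hulls, and on the antidiagonal $k = i(s) + j(s)$ the summand $a_{i(s)} b_{j(s)}$ is the unique term of minimal valuation in $c_k = \sum_{i+j=k} a_i b_j$, so no cancellation occurs and $v(c_k) = v(a_{i(s)}) + v(b_{j(s)})$. As $s \to 0^+$ the minimizing indices run off to infinity, and since $\inf_i v(a_i) = \inf_j v(b_j) = 0$ the vertex heights tend to $0$; hence $\inf_k v(c_k) = 0$ and $\orc(fg) = V$. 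Verifying genericity of the slope, uniqueness of the minimizers, and convergence of the vertex heights to the infimum is the technical heart of the proof.
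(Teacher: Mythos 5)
Your proof is correct, and its skeleton coincides with the paper's: (b) $\Rightarrow$ (a) by definition; (a) $\Rightarrow$ (c) via Theorem~\ref{thm:gauss} to force $\dim V = 1$ and then the greatest-lower-bound criterion of Theorem~\ref{thm:glb} to rule out proper dense subgroups of $\R$; (c) $\Rightarrow$ (a) from the order properties of $\Z$ and $\R$; and faithful flatness (via coherence of $V$) plus Proposition~\ref{pr:Pruf} reduce the remaining implication to showing $\m S \in \Spec S$. The one genuine divergence is at that last step: for $G \cong \R$ the paper simply cites Arnold--Brewer \cite[Lemma 1]{ArBr-val}, whereas you prove primality of $\m S$ from scratch by a Newton-polygon/tilting argument --- choose a generic small slope $s>0$ so that $v(a_i)+si$ and $v(b_j)+sj$ have unique minimizers $i(s)$, $j(s)$ (the set of bad slopes is countable, since each tie determines $s$ as a difference quotient), observe that on the antidiagonal $k=i(s)+j(s)$ the term $a_{i(s)}b_{j(s)}$ has strictly smallest valuation among the finitely many terms of $c_k$, so $v(c_k)=v(a_{i(s)})+v(b_{j(s)})$, and let $s \to 0^+$ so that these vertex heights tend to the infima $\inf_i v(a_i) = \inf_j v(b_j) = 0$. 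This sketch is sound and completable; the only inessential slip is the assertion that the minimizing indices ``run off to infinity'' as $s \to 0^+$, which fails when the infimum is attained (e.g. $v(a_0)=0$ forces $i(s)=0$), but that claim is never used --- only the convergence of the heights matters. What your route buys is self-containedness: you effectively reprove the Arnold--Brewer lemma rather than quote it, at the cost of the technical verification you flag as the heart of the proof; the paper's citation is shorter but leaves that key primality fact as a black box.
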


\begin{proof}
(a) $\implies$ (c): Theorem \ref{thm:gauss} implies that  $\dim V =1$, in which case $G$ is order isomorphic to a subgroup of $\mathbb{R}$.  If $G$ is not isomorphic to $\R$ or $\Z$, then $G$ is a non-closed subgroup of $\R$, and again the condition on greatest lower bounds from Theorem~\ref{thm:glb} will fail.

(c) $\implies$ (a): This follows directly from Theorem~\ref{thm:glb} and the order properties of $\Z$ and $\R$.

(b) $\implies$ (a): By definition.

(a) $\&$ (c) $\implies$ (b): First note that $V[\![X]\!]$ is faithfully flat over $V$ because $V$ is coherent.  As for the rest, by Proposition~\ref{pr:Pruf}, it is enough to show that the maximal ideal of $V$ extends to a prime ideal of $V[\![X]\!]$.  But when $G\cong\Z$, this is standard, and when $G\cong\R$, this is by \cite[Lemma 1]{ArBr-val}.
\end{proof}

Finally, we see what happens over fields:

\begin{prop}\label{pr:field}
Let $k$ be a field, and $S$ a $k$-algebra.  Then \begin{enumerate}
\item $S$ is a faithfully flat Ohm-Rush $k$-algebra.
\item $S$ is a weak content algebra over $k$ iff it is Gaussian over $k$ iff it is an integral domain.
\end{enumerate}
\end{prop}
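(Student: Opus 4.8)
The plan is to first pin down the content function completely, exploiting the fact that the only ideals of the field $k$ are $(0)$ and $k$ itself. Concretely, for $f \in S$ the defining intersection $\orc(f) = \bigcap\{I \mid f \in IS\}$ collapses: if $f = 0$ then $f \in (0)S$, so $\orc(0) = (0)$, while if $f \neq 0$ then $f \notin (0)S = (0)$, so the only ideal $I$ of $k$ with $f \in IS$ is $I = k$, giving $\orc(f) = k$. In either case $f \in \orc(f)S$, so $S$ is an Ohm-Rush $k$-algebra. For the rest of (1), every $k$-module is free and hence flat, and $S$ is faithfully flat because it is nonzero, so that the unique maximal ideal $(0)$ of $k$ satisfies $(0)S = (0) \neq S$. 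This disposes of (1) and furnishes the one computational fact that drives the proof of (2).

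For (2) I would establish the cycle of implications: integral domain $\implies$ Gaussian $\implies$ weak content $\implies$ integral domain. The middle implication is free: if $S$ is Gaussian then $\orc(f)\orc(g) = \orc(fg) \subseteq \sqrt{\orc(fg)}$ for all $f,g \in S$, which is exactly the weak content condition. For the first implication, assuming $S$ is a domain, I would argue by cases using the content formula above. If either $f$ or $g$ is zero, then $fg = 0$ and both $\orc(fg)$ and $\orc(f)\orc(g)$ equal $(0)$; if both are nonzero, then $fg \neq 0$ because $S$ is a domain, so $\orc(fg) = k = k \cdot k = \orc(f)\orc(g)$. Hence $\orc(fg) = \orc(f)\orc(g)$ in every case, and $S$ is Gaussian.

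The remaining implication, weak content $\implies$ integral domain, is where the field hypothesis does the real work, via the triviality of the nilradical of $k$. Suppose toward a contradiction that $S$ has a zero divisor, i.e. there are nonzero $f, g \in S$ with $fg = 0$. Then $\orc(f) = \orc(g) = k$, so $\orc(f)\orc(g) = k$, whereas $\orc(fg) = \orc(0) = (0)$ and therefore $\sqrt{\orc(fg)} = \sqrt{(0)} = (0)$ since $k$ is reduced. The weak content inclusion $\orc(f)\orc(g) \subseteq \sqrt{\orc(fg)}$ would then force $k \subseteq (0)$, which is absurd since $S$, and hence $k$, is nonzero. Thus $S$ has no zero divisors and, being nonzero, is an integral domain.

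The whole argument is elementary once the content function has been computed, so there is no genuine obstacle. The only points demanding a little care are the standing assumption $S \neq 0$, which is needed both for faithful flatness in (1) and for $S$ to qualify as a domain in (2), and the observation that passing to the radical annihilates the zero-divisor case. This last point is exactly what separates the weak content condition from the Gaussian condition in general, but it is precisely why the two coincide here.
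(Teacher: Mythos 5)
Your proof is correct and follows essentially the same route as the paper's: compute that $\orc(f)=k$ for $f\neq 0$ and $\orc(0)=(0)$ (giving the Ohm-Rush and faithful flatness claims), then show a zero-divisor pair violates the radical inclusion while a domain makes $\orc(fg)=\orc(f)\orc(g)$ trivially. Your write-up is in fact slightly more careful than the paper's (explicitly handling the case where one factor is zero and the standing assumption $S\neq 0$), but the underlying argument is identical.
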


\begin{proof}
Faithful flatness is automatic.  As for the Ohm-Rush property, note that for any $s\in s$, $\orc(s) = k$ if $s\neq 0$.  Thus, $s \in S = \orc(s)S$ for all nonzero $s$, and $0 \in 0S = \orc(0)s$.

For the second part, if $S$ is not an integral domain, then there exist nonzero $f, g \in S$ such that $fg=0$, so that $c(f)c(g) = kk=k$, whereas $c(fg) = c(0) = (0)$, which do not have the same radical.  On the other hand, if $S$ is an integral domain, then for any nonzero pair of $f,g\in S$, we have $c(fg) = k = kk= c(f)c(g)$.
\end{proof}

\begin{example}
The above proposition provides examples of chains of ring maps $R \rightarrow S \rightarrow T$ such that $S$ and $T$ are Gaussian over $R$, but $T$ is neither faithfully flat nor Ohm-Rush over $S$.  For example, let $R$ be any field, let $S = R[X]$, and let $T =S_W$, where $W = \{X^n \mid n \in  \N\}$.  Then since $X^n T = T$ for all $n \in \N$, we have \[
\orc_{TS}(1_T) \subseteq \bigcap_{n \in \N} X^n S = (0_S),
\]
but $1_T \notin (0_S) T = 0$.  On the other  hand, $T$ and $S$ are faithfully flat and Gaussian over $R$ by Proposition~\ref{pr:field}.
\end{example}

\section{Questions}
\subsection{\ } Let $R \rightarrow S$ be a faithfully flat ring map.  We know that if $S$ is a content $R$-algebra, it is a semicontent $R$-algebra, and if it is a semicontent $R$-algebra, it is a weak content $R$-algebra.  But are either of these implications reversible?  That is, is there a distinction between these concepts among faithfully flat algebras?

\subsection{\ }
As shown in \S\ref{sec:trans}, the properties of being a weak content algebra, semicontent algebra, and Gaussian algebra are all transitive.  Also, the property of being an Ohm-Rush algebra is known to be transitive \cite[1.2(ii)]{OhmRu-content}.  But is the property of being a content algebra transitive?

\section*{Acknowledgments}
We are grateful to the referee for showing us how to remove unnecessary assumptions from Theorem~\ref{thm:gauss} and Corollary~\ref{cor:1dimV}.

\providecommand{\bysame}{\leavevmode\hbox to3em{\hrulefill}\thinspace}
\providecommand{\MR}{\relax\ifhmode\unskip\space\fi MR }
\providecommand{\MRhref}[2]{%
  \href{http://www.ams.org/mathscinet-getitem?mr=#1}{#2}
}
\providecommand{\href}[2]{#2}

\end{document}